\documentclass{siamltex}
\usepackage{amsfonts}
\usepackage{sw20siam}
\usepackage{amsmath}
\usepackage{amssymb}
\usepackage{mathrsfs}
\usepackage{graphicx}
\usepackage{mathtools}

\newtheorem{hypothesis}[theorem]{Assumption}
\newtheorem{problem}[theorem]{Problem}
\newtheorem{example}[theorem]{\textit{Example}}
\newtheorem{remark}[theorem]{\textit{Remark}}


\newcommand{\eps}{\varepsilon}  
\newcommand{\vf}{\varphi}
\newcommand{\al}{\alpha}

\newcommand{\de}{\delta}
\newcommand{\la}{\lambda}
\newcommand{\si}{\sigma}

\newcommand{\om}{\omega}

\newcommand{\md}{\mathrm{d}}   
\newcommand{\vd}{\,\md}
\newcommand{\me}{\mathrm{e}}   
\newcommand{\ch}{\chi}   

\newcommand{\essinf}{\mathop{\mathrm{ess}\inf}}
\newcommand{\minev}{\lambda_{*}}   
\newcommand{\tr}{\top}   
\newcommand{\Tr}{\mathrm{Tr}}  
\newcommand{\p}{p}  

\newcommand{\con}{\kappa}  

\newcommand{\R}{\mathbb{R}}   
\newcommand{\Sym}{\mathbb{S}}  

\newcommand{\PS}{\Omega}  
\newcommand{\E}{\mathbb{E}}  
\newcommand{\Prob}{\mathbb{P}}  
\newcommand{\Filt}{\mathscr{F}}  
\newcommand{\filt}{\mathbb{F}}  
\newcommand{\Pred}{\mathscr{P}}  
\newcommand{\BM}{w}  
\newcommand{\st}{\tau}  



\newcommand{\isum}{\sum\nolimits_i}  
\newcommand{\tsum}{\sum\nolimits}

\newcommand{\svbset}{\mathfrak{S}}   
\newcommand{\Rc}{P}   
\newcommand{\Df}{\varLambda}   
\newcommand{\Sw}{Q}   
\newcommand{\Cw}{R}   
\newcommand{\Tw}{H}   

\newcommand{\Ctr}{\varGamma}
\newcommand{\Ch}{\varDelta}
\newcommand{\func}{\Theta}

\newcommand{\sRc}{\bar{\Rc}}   
\newcommand{\sSw}{\bar{\Sw}}   
\newcommand{\sCw}{\bar{\Cw}}   
\newcommand{\sTw}{\bar{\Tw}}   

\newcommand{\iRc}{F}   


\newcommand{\dr}{\mathring}  
\newcommand{\df}{\breve}  

\newcommand{\bd}{\mathrm{b}}

\newcommand{\cpr}{\mathcal{S}}   
\newcommand{\ipr}{\mathcal{H}}   

\newcommand{\itv}[1]{[\![ #1 )\mspace{-5mu})}
\newcommand{\citv}[1]{[\![ #1 ]\!]}
\newcommand{\oitv}[1]{(\mspace{-5mu}( #1 ]\!]}

\newcommand{\vA}{\tilde{A}}

\newcommand{\vC}{\tilde{C}}

\newcommand{\vX}{\tilde{X}}
\newcommand{\vvX}{\varPsi}



\title{Solvability conditions for indefinite linear quadratic optimal stochastic control problems and associated stochastic Riccati equations}

\author{Kai~Du\thanks{Institute for Mathematics and Its Applications, School of Mathematics and Applied Statistics,
University of Wollongong,
Wollongong, NSW 2522, Australia ({\tt kaid@uow.edu.au}).}
}  

\begin{document}
\maketitle

\begin{abstract}
A linear quadratic optimal stochastic control problem with random coefficients and indefinite state/control weight costs is usually linked to an indefinite stochastic Riccati equation (SRE) which is a matrix-valued quadratic backward stochastic differential equation along with an algebraic constraint involving the unknown.
Either the optimal control problem or the SRE is solvable only if the given data satisfy a certain structure condition that has yet to be precisely defined.
In this paper, by introducing a notion of subsolution for the SRE, we derive several novel sufficient conditions for the existence and uniqueness of the solution to the SRE and for the solvability of the associated optimal stochastic control problem.
\end{abstract}
\keyphrases{linear quadratic optimal stochastic control, stochastic Riccati equation, backward stochastic differential equation, subsolution, solvability condition}
\AMclass{60H10; 49N10, 93E20}

\section{Introduction}

A classical form of a linear quadratic optimal stochastic control (SLQ for short) problem is to minimize the quadratic cost functional
\begin{equation}\label{cost}
J(u;\xi) = \E\,\biggl\{x(T)^\tr \Tw x(T) + \int_0^T \bigl[ u^\tr(t) \Cw(t) u(t) + x^\tr \Sw(t) x(t)\bigr]\vd t \biggr\}
\end{equation}
with the control $u = u(\cdot)$ being a square-integrable adapted process
and the state $x = x(\cdot)$ being the solution to the linear stochastic control system 
\begin{equation}\label{1-004b}
\md x = (Ax + Bu)\,\md t + \isum (C_i x + D_i u)\,\md \BM^i_t,\quad
x(0) = \xi \in \R^n,
\end{equation}
where $T$ is a given final time, $\BM$ is a $d$-dimensional Wiener process, $A,B,C,D,\Cw,\Sw,$ and $\Tw$ are given coefficients, in particular, $\Cw,\Sw,$ and $\Tw$ are all symmetric matrix-valued processes,
and where we have used a convenient notation
\[
\isum := \tsum_{i=1}^{d}
\]
that will also be used throughout the paper.
As in \eqref{1-004b}, the time variable $t$ will be suppressed for simplicity in many circumstances, when no confusion occurs.
We assume in this article \emph{all the given coefficients to be random}.

Under a {definiteness assumption} that $\Sw$ and $\Tw$ are positive semi-definite and $\Cw$ is positive definite, Bismut \cite{bismut1976linear} made a deep investigation into the above control problem. 
To characterize the minimal cost and construct the optimal feedback control, he formally derived a backward stochastic differential equation (BSDE) called the stochastic Riccati equation (SRE) as follows: 
\begin{subequations}
{\small{\begin{equation}\label{1-01a}
\begin{aligned}
& \md \Rc  =  \tsum_i \Df_{i}\,\md \BM^{i}_t - \Bigl[ A^\tr \Rc  + \Rc  A + \tsum_i (C_{i}^\tr \Rc  C_{i} + C_{i}^\tr \Df_{i} + \Df_{i} C_{i}) + \Sw \Bigr]\md t \\
&\qquad + \Bigl[\Rc B + \tsum_i (C_{i}^\tr \Rc + \Df_{i}) D_{i}\Bigr] \Bigl(\Cw  +  \tsum_i D_{i}^\tr \Rc  D_{i} \Bigr)^{\!-1} \Bigl[B^\tr \Rc  + \tsum_i D_{i}^\tr( \Rc C_{i} + \Df_{i}) \Bigr]\md t,\\
& \Rc(T) = \Tw,
\end{aligned}
\end{equation}}}where the unknown is the matrix-valued process $(\Rc,\Df_1,\dots,\Df_d)$ adapted to the filtration generated by $\BM$.
The minimum of $J(\,\cdot\,;\xi)$ coincides with $\xi^\tr \Rc(0) \xi$ once the SRE is solvable ``properly''.
However, the existence and uniqueness of the solution of \eqref{1-01a} was not completely proved in his work, although he had showed that the original control problem has a unique solution.
He left the solvability of the SRE as an open problem which was resolved decades later by Tang~\cite{tang2003general}\footnote{Recently, Tang~\cite{tang2014dynamic} gave another approach to this problem via the dynamic programming principle.}. 

The systematic study on SLQ problems without the definiteness assumption was initiated by Chen et al.~\cite{chen1998stochastic} who observed that an SLQ problem where $R$ is possibly indefinite may still be solvable.
This finding has triggered an extensive research on the so-called indefinite SLQ problem that has applications in many practical areas, especially in finance (see~\cite{zhou2000continuous,kohlmann2003multidimensional,kohlmann2003minimization,yu2013equivalent} for example).
In~\cite{chen1998stochastic} they also formulated a related indefinite SRE combining \eqref{1-01a} with the constraint
\begin{equation}\label{1-02a}
\Cw  +  \tsum_i D_{i}^\tr \Rc  D_{i} > 0
\quad \text{over } [0,T],
\end{equation}
\label{1-01}\end{subequations}
and proved that the solvability of this equation yields the well-posedness of the original control problem.
This key fact catalyzed quite a few works investigating the existence and uniqueness of the solution to the indefinite SRE.
As indicated in the existing literature, the solvability of \eqref{1-01} is by no means unconditional (see~\cite{chen1998stochastic} for ill-posed examples);
in other words, the equation may have no solution if $\Cw,\Sw$ or $\Tw$ is ``too negative''.
The problem is then to specify the conditions that the given data must satisfy to ensure the solvability of indefinite SREs.
As far as we know, the existing results are limited to several very special cases (see~\cite{hu2003indefinite, qian2013existence}).

In this paper we derive several novel sufficient conditions that ensure the existence and uniqueness of the solution to the indefinite SRE and also imply the solvability of the associated SLQ problem.
According to our understanding,
the constraint \eqref{1-02a} seems to be some kind of coercivity condition that plays a similar role to what the positive definiteness of $\Cw$ does in the definite case. 
But it is too implicit to use.
Our idea is to reveal the coercivity to some degree by means of a new-defined notion of ``subsolution'' for SREs (see Definition~\ref{def2-03} below).
We prove that the existence of subsolutions of \eqref{1-01} implies the well-posedness of the related SLQ problem;
moreover, if SRE~\eqref{1-01} has a subsolution in a strict sense (see Theorem~\ref{th3-01} below), then the equation is solvable and the associated SLQ problem admits a unique optimal feedback control.
The original problem is largely converted into finding the new object of the equation.
A subsolution is an adapted process that satisfies only an inequality form of \eqref{1-01a} --- this relaxing gives us more probabilities to find the target.
Further, considering subsolutions of certain particular forms will bring us several practicable criteria of solvable SREs.
Consequently, we recover many existing results on the solvability of \eqref{1-01}, for instance, obtained in \cite{hu2003indefinite, tang2003general, qian2013existence}.
The proof of Theorem~\ref{th3-01} below occupies most of the technical part of our argument,
in which we borrow an idea from Tang~\cite{tang2003general},
that is, in a nutshell, as long as an associated forward-backward SDE is solvable, a solution of the SRE can be constructed by using the solution of the former ---
we succeed to verify the precondition under our setting, and then achieve our aim.

The rest of the paper is organized as follows.
Section~2 gives a precise formulation of the problems by introducing several notation and definitions.
Section~3 is mainly devoted to the statement of our main results, including some remarks and examples.
Section~4 is the most technical part, containing the proofs of several auxiliary lemmas and the main results.

\section{Preliminaries}

Let $(\PS,\Filt,\filt,\Prob)$ be a
filtered probability space where the filtration $\filt = (\Filt_{t})_{t\geq 0}$ is generated by a $d$-dimensional standard Wiener process $\BM=\{\BM_t;t\geq 0\}$ and satisfies the usual conditions,
$\Pred$ be the predictable $\sigma$-algebra associated with $\filt$.
Fix a finite terminal time $T$. 

Let $\R^n$ be the $d$-dimensional Euclidean space, and $\R^{n\times m}$ the set of $n\times m$ matrices.
We identify $\R^n$ and $\R^{n \times 1}$, 
and use $|M| = [\mathrm{Tr}(M^\tr M)]^{1/2}$ as the norm of $\R^{n\times m}$.
Denote by $\Sym^n$ the set of symmetric $n\times n$ matrices. 
The inequality signs are used to express the usual semi-order of symmetric matrices.
For $\Sym^n$-valued functions (including processes) $M$ and $N$, the expression $M \gg N$ means that $M-N$ is uniformly positive definite almost everywhere (a.e.), i.e., $M-N\ge \de I_n$ a.e. for some $\de > 0$;
the meaning of ``$\ll$'' is obvious.

For stopping times $\si $ and $\tau$ such that $\si\le \tau$, we define
\[
\itv{\si,\tau} = \{(t,\om): t\in [\si(\om),\tau(\om))\};
\]
similarly, we will also use $\citv{\si,\tau}$ and $\oitv{\si,\tau}$.
For $p \in [1,\infty]$, we write
\begin{align*}
\ipr^\p(\si,\tau;\R^{n\times m}) :=\;& L^\p (\itv{\si,\tau},\Pred,\R^{n\times m}),\\
\cpr^\p(\si,\tau;\R^{n\times m}) :=\;& \ipr^\p(\R^{n\times m}) \cap L^\p(\PS;C([\si,\tau];\R^{n\times m})),
\end{align*}
and simply
$$\ipr^\p(\R^{n\times m}) = \ipr^\p(0,T;\R^{n\times m}),
\quad 
\cpr^\p(\R^{n\times m}) = \cpr^\p(0,T;\R^{n\times m}).$$

\begin{definition}\label{def2-03}\rm
$\svbset$ denotes the set of all $\Sym^n$-valued continuous processes $V(\cdot)$ such that
\begin{equation*}
\md V(t) = \dr{V}(t)\vd t + \isum \df{V}_i(t)\vd \BM^i_t
\quad\text{with }\
(\dr{V}, \df{V}_i)\in \ipr^1\times\ipr^2(\Sym^n);
\end{equation*}
Elements of this set are defined up to indistinguishability.
$\svbset^{\bd}$ consists of all bounded processes in $\svbset$.
We write $\df{V} = (\df{V}_1,\dots,\df{V}_d)$.
\end{definition}

With these preparations, let us restate the main problems. 
The following assumption is in force \emph{throughout the paper}.

\begin{hypothesis}\rm
The data $(A,B,C,D;\Cw,\Sw,\Tw)$ satisfy that
\begin{equation*}
\bigg\{\begin{aligned}
& A,C_{i}\in \ipr^\infty(\R^{n \times n}),
\quad
B,D_{i} \in \ipr^\infty(\R^{n \times k}),
\quad i=1,\dots, d.\\
& \Cw \in \ipr^\infty(\Sym^{k}),\quad
\Sw\in \ipr^\infty(\Sym^{n}),
\quad
\Tw \in L^\infty(\PS,\Filt_T, \Sym^{n}).
\end{aligned}
\end{equation*}
\end{hypothesis}

\begin{problem}[LQ optimal stochastic control]\label{prob-a}\rm
Minimize the cost functional \eqref{cost} over $u\in \ipr^2(\R^k)$ 
subject to the control system \eqref{1-004b}.
Define the value function
\begin{equation*}
V(\xi) = \inf_{u\in \ipr^2(\R^{k})} J(u;\xi).
\end{equation*}
The problem is said to be \emph{well-posed} if $V(0) > -\infty$, 
to be \emph{solvable} if for each $\xi\in\R^n$ there is a control $u^* \in \ipr^2(\R^k)$ depending on $\xi$ such that $V(\xi) = J(u^*;\xi)$.
We will refer this optimal control problem as SLQ~$(A,B,C,D;\Cw,\Sw,\Tw)$, or simply, SLQ~$(\Cw,\Sw,\Tw)$ in some circumstances.
\end{problem}

\begin{problem}[stochastic Riccati equation]\label{prob-b}\rm
Define the following functions associated with the parameters $(A,B,C,D;\Cw,\Sw)$: 
\begin{subequations}
\begin{equation}\label{2-03}
\begin{aligned}
\Ch(\Rc) :=\,& \Cw  +  \tsum_i D_{i}^\tr \Rc  D_{i},\\ 
\Ctr(\Rc,\Df) : =\,& - \Ch(\Rc)^{-1} \Bigl[B^\tr\Rc + \tsum_i D_{i}^\tr (\Rc C_{i} + \Df_{i})\Bigr],\\
\func(\Rc,\Df)
:= \, &  A^\tr \Rc  + \Rc  A + \tsum_i (C_{i}^\tr \Rc  C_{i} + C_{i}^\tr \Df_{i} + \Df_{i} C_{i}) + \Sw\\
& - \Ctr(\Rc,\Df)^\tr \Ch(\Rc)\Ctr(\Rc,\Df).
\end{aligned}
\end{equation}
The problem is to find a $\Rc\in \svbset$ such that
\begin{equation}\label{2-01b}
\begin{aligned}
\dr{\Rc} + \func(\Rc,\df{\Rc})  = 0,\quad
\Ch(\Rc)  > 0,\quad
\Rc(T)  = \Tw.
\end{aligned}
\end{equation}
\label{2-01}\end{subequations}
A solution $\Rc$ is said to be bounded if $\Rc\in\svbset^\bd$.
Here and in what follows, the notation $\dr{\Rc}$ and $\df{\Rc}$ are understood in the sense of Definition~\ref{def2-03}. We will refer \eqref{2-01} as SRE~$(A,B,C,D;\Cw,\Sw,\Tw)$, or simply, SRE~$(\Cw,\Sw,\Tw)$ in some circumstances.
\end{problem}

In order to define our sufficient solvability conditions for SLQs and SREs,
we propose an auxiliary notion as follows.

\begin{definition}\label{def2-02}\rm
$\iRc \in \svbset$ is called a \emph{subsolution} to SRE~$(\Cw, \Sw ,\Tw)$ if 
\begin{equation}\label{2-05}
\begin{aligned}
\dr{\iRc} + \func(\iRc,\df{\iRc}) \ge 0,\quad
\Ch(\iRc)  > 0,\quad
\iRc(T)  \le \Tw.
\end{aligned}
\end{equation}
A subsolution $\iRc$ is said to be bounded if $\iRc\in\svbset^\bd$.
\end{definition}

It will be showed that the existence of subsolutions ``almost'' implies the solvability of problems~\ref{prob-a} and \ref{prob-b} (see Theorem~\ref{th3-01} below).
On the other hand, it is usually much easier to verify whether \eqref{2-01} has a subsolution.
These could help us to derive some explicit solvability conditions for SREs.
We remark that such a notion can be regarded as a stochastic counterpart of LMI proposed by Rami et al.~\cite{rami2001solvability} in their study of deterministic Riccati equations.

\section{Results}

The main results stated as the following two theorems are the basis of our further discussion.

\begin{theorem}\label{th3-00}
SLQ~$(\Cw, \Sw ,\Tw)$ is well-posed if SRE~$(\Cw, \Sw ,\Tw)$ has a bounded subsolution.
\end{theorem}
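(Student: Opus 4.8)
The plan is to use the subsolution $\iRc \in \svbset^\bd$ as a "completion of squares" device, mimicking the classical Bismut argument but with an inequality in place of an identity. Fix $\xi\in\R^n$ and an arbitrary admissible control $u\in\ipr^2(\R^k)$ with corresponding state $x$. First I would apply It\^o's formula to $t\mapsto x(t)^\tr \iRc(t) x(t)$ on $[0,T]$. Since $\iRc\in\svbset$, its drift is $\dr\iRc$ and its martingale part has coefficients $\df\iRc_i$, so the $\md t$-integrand of $x^\tr \iRc x$ is
\[
x^\tr\dr\iRc x + 2x^\tr\iRc(Ax+Bu) + \tsum_i (C_ix+D_iu)^\tr\iRc(C_ix+D_iu) + 2\tsum_i x^\tr\df\iRc_i(C_ix+D_iu),
\]
plus a local-martingale term. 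Taking expectations (the stochastic integral is a true martingale because $\iRc$ is bounded and $x,u\in\ipr^2$), and rearranging the integrand into the variables $u$ and $x$, one recognizes exactly the quadratic form built from $\func(\iRc,\df\iRc)$, $\Ctr(\iRc,\df\iRc)$ and $\Ch(\iRc)$. Concretely, the integrand equals
\[
x^\tr\bigl(\dr\iRc + \func(\iRc,\df\iRc)\bigr)x \;+\; \bigl(u-\Ctr(\iRc,\df\iRc)x\bigr)^\tr \Ch(\iRc)\bigl(u-\Ctr(\iRc,\df\iRc)x\bigr) \;-\; x^\tr\Sw x - u^\tr\Cw u,
\]
which is the standard algebraic identity underlying the SRE; it holds pointwise for any $\Sym^n$-valued semimartingale $\iRc$, since $\func,\Ctr,\Ch$ were defined precisely to make it true.

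Combining this with $x(0)=\xi$ and $\iRc(T)\le\Tw$ (so $\E\,x(T)^\tr\iRc(T)x(T)\le \E\,x(T)^\tr\Tw x(T)$), we obtain
\[
J(u;\xi) \;\ge\; \xi^\tr\iRc(0)\xi \;+\; \E\int_0^T\Bigl[ x^\tr\bigl(\dr\iRc+\func(\iRc,\df\iRc)\bigr)x + \bigl(u-\Ctr(\iRc,\df\iRc)x\bigr)^\tr\Ch(\iRc)\bigl(u-\Ctr(\iRc,\df\iRc)x\bigr)\Bigr]\vd t.
\]
Now the subsolution inequalities do their work: $\dr\iRc+\func(\iRc,\df\iRc)\ge0$ makes the first term in the integrand nonnegative, and $\Ch(\iRc)>0$ makes the second term nonnegative. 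Hence $J(u;\xi)\ge \xi^\tr\iRc(0)\xi$ for every admissible $u$, so $V(\xi)\ge\xi^\tr\iRc(0)\xi>-\infty$; in particular $V(0)\ge0>-\infty$, which is well-posedness.

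The main technical point to be careful about is the justification of It\^o's formula and the vanishing of the martingale term: one needs $x\in\cpr^2(\R^n)$ (standard, since $A,B,C,D\in\ipr^\infty$ and $u\in\ipr^2$ give $\E\sup_{[0,T]}|x|^2<\infty$), and boundedness of $\iRc$ together with $\df\iRc\in\ipr^2$ to control the quadratic-variation integrals so that $\int_0^\cdot x^\tr(\df\iRc_i x + \dots)\,\md\BM^i$ is a genuine martingale rather than merely local; a localization-and-Fatou argument handles this cleanly if one prefers to avoid a priori integrability estimates. One should also note $\Ch(\iRc)>0$ only gives nonnegativity of the control-penalty term, not uniform positivity, which is all that is needed here — the stronger $\Ch(\iRc)\gg0$ would instead be used later (in Theorem~\ref{th3-01}) to get existence of a minimizer. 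No nontrivial BSDE theory is invoked at this stage; the whole proof is a single application of It\^o's formula plus the two defining inequalities of a subsolution.
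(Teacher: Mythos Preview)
Your proof is correct and follows essentially the same route as the paper: the paper packages precisely this It\^o--completion-of-squares computation as Lemma~\ref{lem4-10} (stated for general $\eps\ge 0$) and then proves Theorem~\ref{th3-00} in one line by invoking that lemma with $\eps=0$. Your suspicion about the martingale term is well founded --- the paper does not attempt to show the stochastic integral is a true martingale but instead localizes with $\sigma_m=\inf\{t:|x(t)|\ge m\}\wedge T$, drops the nonnegative terms, and passes to the limit, exactly the ``localization-and-Fatou'' device you mention.
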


\begin{theorem}\label{th3-01}
Assume that there is a constant $\eps>0$ such that SRE~$(\Cw - \eps I_k, \Sw ,\Tw)$ has a bounded subsolution $\iRc$. 
Then

\emph{(i)} there exists a unique process $\Rc = \Rc(\cdot)$ in the following set 
\[
\svbset^{\bd}_{\ge\iRc} := \{K\in\svbset^\bd:K(t)\ge\iRc(t)\text{~almost surely~}\forall\,t\in[0,T]\}
\]
solving SRE~$(\Cw, \Sw ,\Tw)$;

\emph{(ii)} SLQ~$(\Cw, \Sw ,\Tw)$ is solvable; 
the value function $V(\xi) = \xi^\tr \Rc(0) \xi$,
and the unique optimal control
$u^*(t) = \Ctr(\Rc(t), \df{\Rc}(t)) x(t)$.
\end{theorem}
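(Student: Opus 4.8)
The plan is to follow the route sketched in the introduction: use the $\eps$-strict subsolution to produce enough coercivity so that an associated forward--backward SDE (FBSDE) is solvable on every subinterval, and then paste the SRE solution together out of the FBSDE solution, à la Tang~\cite{tang2003general}. Concretely, I would proceed as follows.

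\textbf{Step 1: A priori estimates and the role of the $\eps$-strict subsolution.}
Suppose first that $\Rc$ is any solution of SRE~$(\Cw,\Sw,\Tw)$ lying in $\svbset^\bd_{\ge\iRc}$, and let $u\in\ipr^2(\R^k)$ with state $x$ solving \eqref{1-004b}. Apply It\^o's formula to $x^\tr \Rc\, x$ and to $x^\tr \iRc\, x$ and complete the square using the definitions of $\Ch,\Ctr,\func$. Since $\iRc$ is a subsolution of SRE~$(\Cw-\eps I_k,\Sw,\Tw)$, the drift inequality $\dr{\iRc}+\func_{\Cw-\eps I_k}(\iRc,\df{\iRc})\ge 0$ together with $\iRc(T)\le\Tw$ yields, after the square completion,
\begin{equation*}
J(u;\xi)\;\ge\;\xi^\tr\iRc(0)\xi \;+\; \eps\,\E\!\int_0^T |u(t)|^2\vd t \;-\; \E\!\int_0^T (\text{square term involving }u-\Ctr(\iRc,\df\iRc)x)\,\vd t,
\end{equation*}
where the sign conventions are arranged so that the $\eps$-term survives. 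This gives a uniform coercivity estimate in $u$, which (a) reproves Theorem~\ref{th3-00}, and (b) — crucially — will be used to get the compactness/contraction needed for the FBSDE. I would also record the dual bound $J(u;\xi)\le \xi^\tr\Rc(0)\xi + (\text{nonnegative square term})$ obtained from the same identity applied to $\Rc$, which identifies $V(\xi)=\xi^\tr\Rc(0)\xi$ and pins down the optimal control as $u^*=\Ctr(\Rc,\df\Rc)x$ once $\Rc$ is known.

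\textbf{Step 2: Existence of the SRE solution via an FBSDE.}
Following Tang, for fixed $\xi$ consider the FBSDE whose forward part is the closed-loop state equation with feedback $\Ctr(\Rc,\df\Rc)$ formally substituted, coupled to the backward adjoint equation; equivalently, one linearizes and sets up the system $(x, y, z)$ where $y$ plays the role of $\Rc x$ and $z$ of $(\df\Rc) x + \dots$. The coercivity from Step~1 furnishes the monotonicity condition that makes this FBSDE solvable (e.g. by the method of continuation in the coefficient, or by a contraction on a short interval bootstrapped using the uniform estimate). From the solution, the linear-in-$\xi$ dependence $y(t) = \Rc(t)x(t)$ defines the candidate $\Rc(t)$ pointwise; one checks $\Rc\in\svbset$, that it is bounded (using boundedness of $\iRc$ as a floor and the coercive upper bound), that $\Ch(\Rc)\ge \eps I_k>0$, and — via It\^o and the FBSDE — that $\dr\Rc + \func(\Rc,\df\Rc)=0$ with $\Rc(T)=\Tw$. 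The comparison $\Rc\ge\iRc$ follows from a comparison principle applied to the quadratic BSDE, or directly from the variational characterization $\xi^\tr\Rc(0)\xi = V(\xi)\ge \xi^\tr\iRc(0)\xi$ propagated dynamically over $[t,T]$.

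\textbf{Step 3: Uniqueness in $\svbset^\bd_{\ge\iRc}$, and conclusion of (ii).}
For uniqueness, take two solutions $\Rc^1,\Rc^2$ in $\svbset^\bd_{\ge\iRc}$; each satisfies $\Ch(\Rc^j)>0$, so each gives $V(\xi)=\xi^\tr\Rc^j(0)\xi$ by the verification argument of Step~1, hence $\Rc^1(0)=\Rc^2(0)$, and running the same argument from any $t\in[0,T]$ over $[t,T]$ (using $\filt$-conditioning) forces $\Rc^1(t)=\Rc^2(t)$ a.s., then indistinguishability by continuity. For (ii), given the solution $\Rc$, solve the closed-loop state equation with $u^*(t)=\Ctr(\Rc(t),\df\Rc(t))x(t)$ — admissibility $u^*\in\ipr^2(\R^k)$ is immediate from boundedness of $\Ctr(\Rc,\df\Rc)$ and of $\df\Rc$ — and plug into the completed-square identity of Step~1 to get $J(u^*;\xi)=\xi^\tr\Rc(0)\xi = V(\xi)$; the strict positivity $\Ch(\Rc)\ge\eps I_k$ makes the square term vanish only at $u^*$, giving uniqueness of the optimal control.

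\textbf{Main obstacle.}
The genuinely hard part is Step~2: verifying the precondition that the associated FBSDE is solvable under our hypothesis. Tang's framework requires a monotonicity/coercivity structure, and our only leverage is the inequality satisfied by the $\eps$-strict subsolution $\iRc$; translating ``$\iRc$ is an $\eps$-strict subsolution'' into exactly the quantitative monotonicity needed for global solvability of the FBSDE — uniformly in the terminal time and with the quadratic nonlinearity in $\df\Rc$ controlled — is where the real work lies. The boundedness of $\Rc$ (an $L^\infty$, not merely $\ipr^2$, bound on both $\Rc$ and the martingale part $\df\Rc$, the latter via a BMO-type or John--Nirenberg argument for quadratic BSDEs) is a secondary technical point that must be handled carefully alongside it.
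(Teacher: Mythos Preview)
Your architecture (coercivity from the $\eps$-subsolution $\Rightarrow$ FBSDE $\Rightarrow$ SRE via $P=YX^{-1}$) is the paper's, but you have Step~2 inverted, and this is exactly the obstacle you flagged. The paper does \emph{not} attack the FBSDE by monotonicity or continuation. Instead it first proves SLQ~$(\Cw,\Sw,\Tw)$ is solvable: Lemma~\ref{lem4-10} gives $J(u;\xi)\ge\xi^\tr\iRc(0)\xi+\eps\,\E\int_0^T|u|^2\vd t$ (your Step~1 inequality, but with \emph{no} subtracted square term --- the square term has the good sign and is simply dropped), so $J(\,\cdot\,;\xi)$ is convex, continuous and coercive, and a minimizer $u^*$ exists by weak compactness. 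The stochastic maximum principle then says the optimal pair together with its adjoint \emph{is} a solution of the Hamiltonian FBSDE. Your ``main obstacle'' thus evaporates: the FBSDE is solved for free once the control problem is. The genuine technical work is elsewhere, in Lemma~\ref{lem4-01}: showing that the forward matrix $X$ stays invertible on all of $[0,T]$ so that $P=YX^{-1}$ is well-defined, which the paper does via a stopping-time argument and an auxiliary linear SDE (Lemma~\ref{lem4-04}).

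A second concrete error: in your Step~3 you assert that ``admissibility $u^*\in\ipr^2(\R^k)$ is immediate from boundedness of $\Ctr(\Rc,\df\Rc)$ and of $\df\Rc$''. With random coefficients $\df\Rc$ is \emph{not} bounded --- it is only in $\ipr^2$ --- and neither is $\Ctr(\Rc,\df\Rc)$. The paper establishes $u_P\in\ipr^2$ by comparing the completed-square identity for $\Rc$ with the one for the subsolution $\iRc$ along a localizing sequence of stopping times, using $\Rc\ge\iRc$ to absorb the terminal terms and extract $\eps\,\E\int_0^{\sigma_m}|u_P|^2\vd t\le\xi^\tr[\Rc(0)-\iRc(0)]\xi$; only then can Fatou and the standard verification run. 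Your uniqueness-by-dynamic-verification idea is fine in spirit, but it needs this admissibility as input.
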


Theorem~\ref{th3-00} will be proved in Subsection~4.2.
The proof of Theorem~\ref{th3-01}, deferred to Subsection~4.3,
is based on an idea of Tang~\cite{tang2003general}, i.e., to represent a solution of SRE via the solution of a forward-backward stochastic differential equation (FBSDE).
The latter is usually called the generalized Hamiltonian system with respect to the associated control problem.

\begin{remark}\label{rmk.th}\rm
By Definition~\ref{def2-02} we can see that,
if $\Cw_1 \ge \Cw_2$, then each subsolution to SRE~$(\Cw_2, \Sw, \Tw)$
is also a subsolution to SRE~$(\Cw_1, \Sw, \Tw)$. 
Therefore, the assumption of Theorem~\ref{th3-01} can be stated equivalently as follows:
\emph{$\Cw \gg \hat{\Cw}$ and SRE~$(\hat{\Cw} , \Sw, \Tw)$ has a bounded subsolution.}
\end{remark}

\begin{remark}\rm
In Theorem~\ref{th3-01} the existence and uniqueness result for the SRE is restricted within a subset of $\svbset$ (namely $\svbset^{\bd}_{\ge\iRc}$),
which is natural and sensible from the point of view of optimal control.
Nevertheless, it is not clear so far whether the SRE admits a solution outside the set $\svbset^{\bd}_{\ge\iRc}$.
\end{remark}

The result in Theorem~\ref{th3-01} is, of course, not optimal.
A more satisfactory assertion might be ``an SRE is solvable if and only if it has a subsolution''; unfortunately, this is not true, even in the deterministic case.
Let us consider the following example.

\begin{example}\rm
Consider the following ODE over the time interval $[0,2]$:
\begin{equation*}
\dot{P} = \frac{P^2}{(1-t)^2\ch_{[0,1)}(t) + \ch_{[1,2]}(t)},
\quad P(2) = 1.
\end{equation*}
Clearly, $F= 0$ is a subsolution to this Riccati equation. However, it is easily verified that it has no continuous solution.
\end{example}

Nevertheless, this assertion would be true under some additional condition.
For instance, when the coefficients are all deterministic, the equation~\eqref{1-01a} subject to the stronger constraint
\begin{equation}\label{1-01c}
\Ch(\Rc) = \Cw + \isum D^\tr_i \Rc D_i \gg 0
\end{equation}
is solvable if and only if it has a subsolution satisfying \eqref{1-01c}; we thus conjecture that this may also be available for the stochastic case, but have not found any proof at the moment.

Next we derive from Theorem~\ref{th3-01} some explicit sufficient conditions that ensure the existence of solutions to SREs.
A basic idea is to consider the subsolutions with certain particular forms.
An interesting question is how ``negative'' the datum $\Cw$ could be to maintain the solvability of \eqref{2-01} when $\Sw$ and $\Tw$ are given.
Let us make a first attempt to this question.
In the following two results, we provide two ``robust'' criteria of the ``admissible'' $\Cw$.

In what follows, we denote 
\begin{equation}\label{2-10}
\minev(M) = \text{the minimal eigenvalue of a symmetric matrix }M.
\end{equation}
Note that, for a matrix-valued stochastic process $A$, $\minev(A)$ is a scalar stochastic process.

\begin{proposition}\label{prp3-01}
Let $\isum D_i^\tr D_i \gg 0$, and $\zeta:\citv{0,T} \to [0,1)$ be predictable.
Assume that the square-integrable predictable processes $(\vf,\psi_1,\dots,\psi_d)$ with $\vf>0$ satisfy the following BSDE:
\begin{equation}\label{2-07}
\md \vf = - \big[\minev(\Upsilon(\vf,\psi,\zeta)) \vf + \minev (\Sw)\big]\vd t
+ \psi\vd w_t, \quad \vf(T) = \minev(\Tw),
\end{equation}
where
\begin{gather*}
\Upsilon(\vf,\psi,\zeta) := A^\tr + A + \isum C_i^\tr C_i 
+ \isum \frac{\psi_i}{\vf} (C_i^\tr + C_i)\\
- \frac{1}{1-\zeta}
\Bigl(B + \isum C^\tr_i D_i + \isum \frac{\psi_i}{\vf}D_i\Bigr)
\Bigl(\isum D^\tr_i D_i\Bigr)^{\!\!-1}\!
\Bigl(B + \isum C^\tr_i D_i + \isum \frac{\psi_i}{\vf}D_i\Bigr)^{\!\!\tr}\!.
\end{gather*}
Then, SRE $(\Cw, \Sw ,\Tw)$ admits a bounded solution provided $\Cw \gg - \zeta\vf \isum D^\tr_i D_i$.
\end{proposition}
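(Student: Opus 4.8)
The plan is to derive the proposition from Theorem~\ref{th3-01}, or rather from the equivalent formulation recorded in Remark~\ref{rmk.th}. Write $\mathcal D := \isum D_i^\tr D_i$, which is uniformly positive definite by hypothesis, and set $\hat\Cw := -\zeta\vf\mathcal D$. Then the standing hypothesis $\Cw \gg -\zeta\vf\isum D_i^\tr D_i$ is precisely $\Cw\gg\hat\Cw$, so by Remark~\ref{rmk.th} it suffices to exhibit a bounded subsolution to SRE~$(\hat\Cw,\Sw,\Tw)$ in the sense of Definition~\ref{def2-02}; Theorem~\ref{th3-01}(i) will then provide the desired bounded solution of SRE~$(\Cw,\Sw,\Tw)$ (in fact the unique one lying in $\svbset^\bd_{\ge\iRc}$). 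For this reduction to be legitimate one also needs $\hat\Cw\in\ipr^\infty(\Sym^k)$, which holds once $\vf$ is bounded; see the last paragraph.

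First I would try the scalar ansatz $\iRc := \vf\,I_n$, so that $\dr\iRc = \dr\vf\,I_n$ and $\df\iRc_i = \psi_i I_n$, and compute the quantities in \eqref{2-03} for the data $(A,B,C,D;\hat\Cw,\Sw)$. A short calculation gives
\[
\Ch(\iRc) = \hat\Cw + \vf\mathcal D = (1-\zeta)\,\vf\,\mathcal D,
\]
\[
\Ctr(\iRc,\df\iRc) = -\tfrac{1}{1-\zeta}\,\mathcal D^{-1}\Bigl(B+\isum C_i^\tr D_i+\isum\tfrac{\psi_i}{\vf}D_i\Bigr)^{\tr},
\]
and then, after substituting these into the definition of $\func$ and collecting all the terms proportional to $\vf$, the key identity
\[
\func(\iRc,\df\iRc) = \vf\,\Upsilon(\vf,\psi,\zeta) + \Sw.
\]
This identity --- that on the half-line of scalar multiples of $I_n$ the matrix operator $\func$ collapses to $\vf\,\Upsilon(\vf,\psi,\zeta)+\Sw$ --- is the conceptual core of the proposition; the precise form of $\Upsilon$ in the statement is dictated by it.

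Next I would check the three conditions \eqref{2-05}. Since $\vf>0$, $\zeta<1$ a.e.\ and $\mathcal D\gg0$, the matrix $\Ch(\iRc)=(1-\zeta)\vf\mathcal D$ is positive definite a.e. Inserting the BSDE \eqref{2-07}, i.e.\ $\dr\vf = -\bigl[\minev(\Upsilon(\vf,\psi,\zeta))\vf+\minev(\Sw)\bigr]$, into the identity above yields
\[
\dr\iRc + \func(\iRc,\df\iRc) = \vf\bigl(\Upsilon(\vf,\psi,\zeta)-\minev(\Upsilon(\vf,\psi,\zeta))I_n\bigr) + \bigl(\Sw-\minev(\Sw)I_n\bigr) \ge 0
\]
a.e., because $\vf>0$ and $M-\minev(M)I_n\ge0$ for every $M\in\Sym^n$. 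Finally $\iRc(T)=\vf(T)I_n=\minev(\Tw)I_n\le\Tw$. Hence $\iRc=\vf I_n$ satisfies \eqref{2-05}, i.e.\ is a subsolution to SRE~$(\hat\Cw,\Sw,\Tw)$, and it belongs to $\svbset^\bd$ provided $\vf$ is bounded.

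The step I expect to be the main obstacle is confirming that $\vf$ is bounded --- so that $\iRc\in\svbset^\bd$ and $\hat\Cw\in\ipr^\infty(\Sym^k)$ --- and that the generator $\minev(\Upsilon(\vf,\psi,\zeta))\vf+\minev(\Sw)$ lies in $\ipr^1$, so that $\iRc\in\svbset$ (i.e.\ $\dr\iRc\in\ipr^1$, $\df\iRc\in\ipr^2$). The integrability is implicit in the assumption that $(\vf,\psi)$ solves \eqref{2-07} with $\psi\in\ipr^2$. For the boundedness one uses that the terminal datum $\minev(\Tw)$ is bounded (as $\Tw\in L^\infty$) together with a priori estimates for the scalar BSDE \eqref{2-07}, whose generator is concave in the martingale integrand --- the minimal-eigenvalue map being concave and nondecreasing, composed with an affine-minus-positive-semidefinite-quadratic expression in $\psi$. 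Granting this, Theorem~\ref{th3-01} applies and produces the bounded solution of SRE~$(\Cw,\Sw,\Tw)$, completing the argument.
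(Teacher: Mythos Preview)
Your proposal is correct and follows essentially the same route as the paper: reduce via Remark~\ref{rmk.th} to showing that $\iRc=\vf I_n$ is a bounded subsolution of SRE~$(-\zeta\vf\isum D_i^\tr D_i,\Sw,\Tw)$, verify the identity $\func(\iRc,\df\iRc)=\vf\,\Upsilon(\vf,\psi,\zeta)+\Sw$, and check the three conditions in \eqref{2-05} using $\vf>0$ and $M\ge\minev(M)I_n$. Your additional paragraph on the boundedness of $\vf$ is a point the paper's own proof does not address explicitly; the paper effectively leaves it to the quadratic-BSDE theory of Kobylanski mentioned immediately after the proof.
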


\begin{proof}
According to Remark~\ref{rmk.th}, 
it is sufficient to show that $F := \vf I_n$ is a subsolution to SRE $(\hat{\Cw}, \Sw ,\Tw)$ where $\hat{\Cw} := - \zeta\vf \isum D^\tr_i D_i$.  
Using the notation introduced in Definition~\ref{def2-03}, we have
\begin{align*}
\df{F}_i = \psi_i I_n,\quad  
\dr{F} = - \minev(\Upsilon(\vf,\psi,\zeta)) \vf I_n - \minev(\Sw) I_n.
\end{align*}
Then the expression $\func(F, \df{F})$ (recall \eqref{2-03}) associated to SRE~$(\hat{\Cw}, \Sw ,\Tw)$ reads
\begin{align*}
& \vf \Big( A^\tr + A + \isum C_i^\tr C_i \Big)
+ \isum \psi_i (C_i^\tr + C_i)
- \Big(\vf B + \vf \isum C^\tr_i D_i + \isum {\psi_i} D_i\Big) \\
& \times \Big[(-\zeta + 1)\vf \isum D^\tr_i D_i\Big]^{-1}
\Big(\vf B + \vf \isum C^\tr_i D_i + \isum {\psi_i} D_i\Big)^\tr + Q\\
& = \vf \Upsilon(\vf,\psi,\zeta) + Q.
\end{align*}
Keeping \eqref{2-10} in mind, since $\vf > 0$, we have
\begin{gather*}
\vf \Upsilon(\vf,\psi,\zeta) + Q
\ge \vf \minev(\Upsilon(\vf,\psi,\zeta)) I_n + \minev(Q) I_n
= - \dr{F}.
\end{gather*}
This along with the fact that 
$F(T) = \vf(T) I_n = \minev(\Tw) \le \Tw$ yields that $F$ is a subsolution to SRE~$(\hat{\Cw}, \Sw ,\Tw)$.
The proof is complete.
\end{proof}

Equation~\eqref{2-07} is actually a one-dimensional quadratic BSDE,
of which the existence of the solution was proved by Kobylanski \cite{kobylanski2000backward}.
Nevertheless, due to its high nonlinearity, \eqref{2-07} is often difficult to solve explicitly.
Therefore, we formulate a simplified version.
First of all, let us introduce another notation: for a matrix-valued random variable $M$, define
\[
\la_{\#}(M) = \essinf\nolimits_{\om\in\PS} \minev(M(\om)).
\]
Note that when $A$ is a process, $\la_{\#}(A)$ is a deterministic function of time variable.

\begin{theorem}\label{th3-02}
Let $\isum D_i^\tr D_i \gg 0$, and $\al:(0,T]\to[0,1)$.
Let $\vf>0$ satisfy the following ODE:
\begin{equation}\label{2-08}
\dot{\vf} + \la_{\#}(\varUpsilon(\al))\vf + \la_{\#}(\Sw) = 0,
\quad \vf(T) = \la_{\#}(\Tw),
\end{equation}
where
\begin{align*}
\varUpsilon(\al) ~:=~ & A^\tr + A + \isum C_i^\tr C_i - \frac{1}{1-\al}
\Big(B + \isum C^\tr_i D_i\Big)\\
&\cdot \Big(\isum D^\tr_i D_i\Big)^{-1}
\Big(B + \isum C^\tr_i D_i \Big)^\tr.
\end{align*}
Then, SRE $(\Cw, \Sw ,\Tw)$ admits a bounded solution provided $\Cw \gg - \al\vf \isum D^\tr_i D_i$.
\end{theorem}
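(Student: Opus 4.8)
The plan is to derive Theorem~\ref{th3-02} as a corollary of Proposition~\ref{prp3-01} by choosing the data there in a way that decouples the BSDE~\eqref{2-07} into the deterministic ODE~\eqref{2-08}. Concretely, I would take the predictable process $\zeta$ in Proposition~\ref{prp3-01} to be the (deterministic) function $\al$, and look for a solution of \eqref{2-07} of the special form $\psi \equiv 0$, i.e., I would claim that the deterministic $\vf$ solving \eqref{2-08} is itself a solution of the BSDE~\eqref{2-07}. With $\psi_i = 0$ the generator $\Upsilon(\vf,\psi,\zeta)$ in Proposition~\ref{prp3-01} collapses precisely to $\varUpsilon(\al)$ as defined in Theorem~\ref{th3-02}, since all the $\psi_i/\vf$ terms vanish. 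So the content is to check that replacing $\minev(\Upsilon(\cdot))$ and $\minev(\Sw)$ (which are random processes) by their essential infima over $\PS$, namely $\la_\#(\varUpsilon(\al))$ and $\la_\#(\Sw)$, still yields a genuine subsolution — not an exact solution of \eqref{2-07}, but enough to invoke the subsolution machinery.

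The key steps, in order, are as follows. First I would record that $\la_\#(\varUpsilon(\al))$ and $\la_\#(\Sw)$ are bounded deterministic functions of $t$ (this uses Assumption on the boundedness of the coefficients and $\al$-valued in $[0,1)$, so $1/(1-\al)$ is controlled only if $\al$ is bounded away from $1$ — I should note this; if $\al$ can approach $1$ the statement needs $\al$ bounded, or the ODE is understood to possibly blow up and "admits a bounded solution" is conditional on $\vf$ existing and staying bounded, which is already the hypothesis "Let $\vf>0$ satisfy"). Given that, the linear ODE~\eqref{2-08} has a unique solution $\vf$, and it is automatically bounded on $[0,T]$; the hypothesis $\vf>0$ is imposed. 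Second, I would set $F := \vf I_n$, so that in the notation of Definition~\ref{def2-03} one has $\df{F}_i = 0$ and $\dr F = \dot\vf\, I_n = -[\la_\#(\varUpsilon(\al))\vf + \la_\#(\Sw)]I_n$. Third, exactly as in the proof of Proposition~\ref{prp3-01}, I would compute $\func(F,\df F)$ for SRE~$(\hat\Cw,\Sw,\Tw)$ with $\hat\Cw := -\al\vf\isum D_i^\tr D_i$; with $\psi_i=0$ this equals $\vf\,\varUpsilon(\al) + \Sw$. Fourth, using $\vf>0$ and the definition of $\la_\#$ I bound $\vf\,\varUpsilon(\al) + \Sw \ge \vf\,\la_\#(\varUpsilon(\al))I_n + \la_\#(\Sw)I_n = -\dr F$ almost everywhere, which is the first inequality in \eqref{2-05}. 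Since $\hat\Cw + \isum D_i^\tr\hat\Rc D_i$ evaluated at $\Rc=F$ equals $(1-\al)\vf\isum D_i^\tr D_i \gg 0$ (as $\al<1$, $\vf>0$, $\isum D_i^\tr D_i\gg0$), the coercivity condition $\Ch(F)>0$ holds, and $F(T)=\la_\#(\Tw)I_n \le \Tw$ by definition of $\la_\#$. Hence $F$ is a bounded subsolution to SRE~$(\hat\Cw,\Sw,\Tw)$. Finally I invoke Remark~\ref{rmk.th} (or directly Theorem~\ref{th3-01}): since $\Cw \gg \hat\Cw = -\al\vf\isum D_i^\tr D_i$ and SRE~$(\hat\Cw,\Sw,\Tw)$ has a bounded subsolution, SRE~$(\Cw,\Sw,\Tw)$ admits a bounded solution.

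The proof is essentially a verification, so there is no deep obstacle; the only place requiring a little care is the claim that one may take $\psi\equiv0$ and pass from the random generator in \eqref{2-07} to the deterministic one in \eqref{2-08} while still landing on a valid subsolution rather than an exact solution — i.e., recognizing that $\la_\#(\varUpsilon(\al))\vf + \la_\#(\Sw) \le \minev(\varUpsilon(\al))\vf + \minev(\Sw)$ pointwise (using $\vf>0$), which makes $-\dot\vf$ a \emph{lower} bound for $\func(F,\df F)$, exactly the direction needed for \eqref{2-05}. One should also make sure $\varUpsilon(\al)$ is well-defined, which needs $\isum D_i^\tr D_i\gg0$ (assumed) so the inverse exists and is bounded, and needs $1/(1-\al)$ integrable/bounded, which is why $\al$ maps into $[0,1)$ and is implicitly bounded away from $1$ on $[0,T]$ (or else $\vf$ is only assumed, not constructed). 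A secondary point is that the ODE~\eqref{2-08} should be read with $\la_\#(\varUpsilon(\al))$ and $\la_\#(\Sw)$ as its coefficients, which are $L^\infty$ in $t$, so Carathéodory existence-uniqueness applies and $\vf\in C([0,T])$ is automatic once it does not change sign. With these remarks in place the argument closes.
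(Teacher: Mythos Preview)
Your proposal is correct and follows essentially the same route as the paper: set $F=\vf I_n$, so that $\df F=0$ and $\dr F=\dot\vf I_n$, verify directly that $F$ is a bounded subsolution to SRE~$(-\al\vf\sum_i D_i^\tr D_i,\Sw,\Tw)$ via the inequality $\vf\,\varUpsilon(\al)+\Sw\ge \la_\#(\varUpsilon(\al))\vf I_n+\la_\#(\Sw)I_n=-\dr F$, and then invoke Remark~\ref{rmk.th}/Theorem~\ref{th3-01}. The paper omits the details, simply noting that the argument is analogous to that of Proposition~\ref{prp3-01} with the simplification $\df F=0$; your write-up supplies those details (and the useful observation that the passage from $\minev$ to $\la_\#$ goes in the right direction for the subsolution inequality).
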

\begin{proof}
The proof is analogous to that of Proposition~\ref{prp3-01} 
--- to show that $F := \vf I_n$ is a subsolution to SRE $(- \al\vf \isum D^\tr_i D_i, \Sw ,\Tw)$.
This is even simpler here as $\df{F} = 0$ in this case, so we omit the detail.
\end{proof}

An implicit condition of the above two results is that $\Tw\gg 0$.
Although the second criterion is rougher than the previous one,
it is significantly more feasible since \eqref{2-08} is a linear ODE that can be resolved explicitly as follows:
\begin{gather*}
\vf(t) = \varPhi(t,1)\la_{\#}(\Tw)
+ \int_t^1 \varPhi(t,s) \la_{\#}(\Sw(s)) \vd s
\quad\text{with}~~
\varPhi(t,s) = \me^{\int_t^s[\la_{\#}(\varUpsilon(\alpha))](r)\vd r}.
\end{gather*}
Since $\varPhi\gg 0$, an appropriate choice of $\la_{\#}(\Sw)$, even being negative, can also ensure $\vf > 0$.
Therefore, from the above result, one can easily construct various examples of solvable indefinite SREs, including those in which not only $R$ but also $Q$ is indefinite.
As far as we know, such a kind of solvability conditions seemed also new for the deterministic case.

Since only one-dimensional condition is concerned, this criterion is still rough, especially for multidimensional equations. 
Likely some refinement of the analysis will yield a more precise solvability condition,
for instance, $\al(\cdot)$ can be matrix-valued;
this is planned as future work. 
Nevertheless, the above result would be sharp in some one-dimensional cases.

\begin{example}\label{ex5-04}\rm
Consider the following equation
\begin{equation}\label{3-01}
\md P(t) = \frac{(P(t) + \Df(t))^2}{r(t)+P(t)}\vd t + \Df(t)\vd \BM_t,
\quad
r(t)+P(t) > 0,
\quad P(1) = 1.
\end{equation}
Take a function $\al:(0,1]\to[0,1)$. 
By Theorem~\ref{th3-02}, if
\begin{equation}\label{3-02}
r(t) > r_0(t) := - \al(t)\vf(t) = - \al(t) \exp\bigg(-\int_t^1\frac{1}{1-\al(s)}\vd s\bigg),
\end{equation}
then \eqref{3-01} admits a solution.
Indeed, how to choose the function $\al$ for different $r$ is tricky business.
Herein we consider, as an example, a special case that the threshold $r_0(t) = r_0$ is a constant, i.e.,
\[
\frac{\md r_0}{\md t}(t) = 0 ~\Longrightarrow~ \frac{\md \al}{\md t}(t) = - \frac{\al(t)}{1-\al(t)}.
\]
Thus, the inverse function of $\al$ is $t(\al) = \al - \ln \al + \con$ where $\con$ is a constant.
Evidently, $t(\cdot)$ is decreasing on $(0,1)$, and increasing on $(1,\infty)$, and $t(1) = 1+\con$.
To make $r_0 = - \al(1)$ as low as possible,
we choose $t(1) = 1+\con = 0$, i.e., $\con = -1$, then $\al(1)$ is a solution of the equation $1 = \al - \ln \al -1$; approximately, $\al(1) \approx 0.15859$.
Hence, \eqref{3-01} is solvable as long as $r(t) > - 0.15859$.
This value coincides with that given in~\cite[Example~3.2]{chen1998stochastic} where they considered deterministic equations.
\end{example}

In particular, Theorems~\ref{th3-01} and \ref{th3-02} yield directly the following known results.

\begin{corollary}\label{cor3-01}
Let $\Cw,\Sw,\Tw\ge 0$.
SRE $(\Cw, \Sw ,\Tw)$ admits a solution if, {\rm i)} $\Cw \gg 0$, or {\rm ii)} $\Tw \gg 0$ and $\isum D_i^\tr D_i \gg 0$.
\end{corollary}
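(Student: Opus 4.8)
The plan is to obtain both cases by exhibiting suitable subsolutions and invoking Theorem~\ref{th3-01} (equivalently, the reformulation in Remark~\ref{rmk.th}). Throughout we use the standing assumption $\Cw,\Sw,\Tw\ge 0$.

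For case (i), suppose $\Cw\gg 0$, say $\Cw\ge\de I_k$ a.e. with $\de>0$. The natural candidate subsolution is the constant process $\iRc\equiv 0$. Then $\dr{\iRc}=0$ and $\df{\iRc}=0$, so $\Ctr(0,0) = -\Ch(0)^{-1}B^\tr\cdot 0 = 0$, and $\func(0,0) = \Sw\ge 0$; hence $\dr{\iRc}+\func(\iRc,\df{\iRc}) = \Sw\ge 0$. Moreover $\Ch(0) = \Cw\ge\de I_k>0$, and $\iRc(T)=0\le\Tw$. Thus $\iRc\equiv 0$ is a bounded subsolution of SRE~$(\Cw,\Sw,\Tw)$. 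To apply Theorem~\ref{th3-01} we need a bounded subsolution of SRE~$(\Cw-\eps I_k,\Sw,\Tw)$ for some $\eps>0$: taking $\eps=\de/2$, the same process $\iRc\equiv 0$ works since $\Ch_{\Cw-\eps I_k}(0) = \Cw-\tfrac{\de}{2}I_k\ge\tfrac{\de}{2}I_k>0$, while the other two conditions are unchanged (they do not involve $\Cw$). Hence Theorem~\ref{th3-01}(i) gives a (bounded) solution.

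For case (ii), suppose $\Tw\gg 0$ and $\isum D_i^\tr D_i\gg 0$; note this forces $\la_{\#}(\Tw)>0$. We appeal to Theorem~\ref{th3-02}. Pick any constant $\al\in[0,1)$, e.g.\ $\al=0$, so that $\varUpsilon(0) = A^\tr+A+\isum C_i^\tr C_i - (B+\isum C_i^\tr D_i)(\isum D_i^\tr D_i)^{-1}(B+\isum C_i^\tr D_i)^\tr$, which lies in $\ipr^\infty(\Sym^n)$ by the standing hypothesis and the uniform positive-definiteness of $\isum D_i^\tr D_i$; consequently $\la_{\#}(\varUpsilon(0))$ is a bounded function of $t$. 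The linear ODE~\eqref{2-08} then has the explicit solution displayed after Theorem~\ref{th3-02}, namely $\vf(t)=\varPhi(t,T)\la_{\#}(\Tw)+\int_t^T\varPhi(t,s)\la_{\#}(\Sw(s))\,\md s$ with $\varPhi\gg 0$; since $\la_{\#}(\Tw)>0$ and $\la_{\#}(\Sw)\ge 0$ (because $\Sw\ge 0$), we get $\vf(t)>0$ on $[0,T]$, as required. Finally, the hypothesis of Theorem~\ref{th3-02} asks that $\Cw\gg -\al\vf\isum D_i^\tr D_i = 0$ with our choice $\al=0$; and indeed $\Cw\ge 0$ does \emph{not} immediately give $\Cw\gg 0$. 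This is the one genuine gap: to close it, choose instead $\al(t)>0$, so the required condition becomes $\Cw\gg -\al\vf\isum D_i^\tr D_i$, whose right-hand side is $\ll 0$ (strictly negative definite, since $\al\vf>0$ and $\isum D_i^\tr D_i\gg 0$), and therefore $\Cw\ge 0\gg -\al\vf\isum D_i^\tr D_i$ holds automatically. With any such fixed $\al\in(0,1)$ (a constant suffices), Theorem~\ref{th3-02} yields a bounded solution of SRE~$(\Cw,\Sw,\Tw)$.

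The main obstacle is the subtle point just noted in case (ii): a naive application with $\al=0$ reduces the hypothesis to $\Cw\gg 0$, which is weaker than needed and in fact is case (i). The resolution is to exploit the slack created by a strictly positive $\al$, which turns the threshold $-\al\vf\isum D_i^\tr D_i$ strictly negative and hence makes $\Cw\ge 0$ sufficient; the positivity of $\vf$, guaranteed by $\Tw\gg 0$, $\Sw\ge 0$, and $\varPhi\gg 0$, is what makes this work. Everything else is a direct substitution into Definition~\ref{def2-02} and the statements of Theorems~\ref{th3-01} and~\ref{th3-02}, with no further estimates required.
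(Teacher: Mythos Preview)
Your proof is correct and follows exactly the route the paper intends: case~(i) via Theorem~\ref{th3-01} with the trivial subsolution $\iRc\equiv 0$, and case~(ii) via Theorem~\ref{th3-02} with a constant $\al\in(0,1)$ so that the threshold $-\al\vf\isum D_i^\tr D_i$ is uniformly negative definite (using that $\vf$ is continuous and strictly positive on $[0,T]$ since $\la_{\#}(\Tw)>0$, $\la_{\#}(\Sw)\ge 0$, and $\la_{\#}(\varUpsilon(\al))$ is bounded). The exposition could be streamlined by starting case~(ii) directly with $\al>0$ rather than first trying $\al=0$, but the mathematics is sound.
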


The first case was an open problem proposed by Bismut~\cite{bismut1977controle} and Peng~\cite{peng1999open}, respectively, and resolved by Tang~\cite{tang2003general}.
The other was indicated by Kohlmann--Tang~\cite{kohlmann2003minimization,kohlmann2003multidimensional}.

Finally we extend a recent result of Qian--Zhou~\cite{qian2013existence},
where certain data of the SRE are not necessarily bounded.

\begin{proposition}\label{prp3-02}
Let $
(\sCw,\sSw) \in \ipr^2(\Sym^k\times\Sym^n)$ and 
$\sTw \in L^2(\PS,\Filt_T,\Sym^n)$.
Take $K\in \svbset$ such that 
\begin{align}\label{2-06}
K B + \isum (C_{i}^\tr K  D_{i} + \df{K}_{i} D_{i}) = 0.
\end{align}
Assume that $\hat{\Cw}$, $\hat{\Sw}$ and $\hat{\Tw}$ defined as below are all bounded and positive semi-definite:
\begin{align*}
& \hat{\Sw} = \sSw + (\dr{K}  + A^\tr K  + K  A) + \isum (C_{i}^\tr K  C_{i} + C_{i}^\tr \df{K}_{i} + \df{K}_{i} C_{i}),\\
& \hat{\Cw} = \sCw + \isum D^\tr_i K D_i,\\
& \hat{\Tw} = \sTw - K(T).
\end{align*}
Then, SRE~$(\sCw, \sSw ,\sTw)$ admits a solution $\sRc\in \svbset$
if either of the following cases occurs:
{\rm i)} $\hat{\Cw} \gg 0$;
{\rm ii)} $\hat{\Tw} \gg 0$ and $\isum D_i^\tr D_i \gg 0$.
Moreover, $\sRc - K$ is positive definite and bounded.
\end{proposition}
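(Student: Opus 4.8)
The plan is to strip the unboundedness from the data by an affine shift and then fall back on the bounded theory already developed. I would look for a solution of the form $\sRc=\hat\Rc+K$, where $\hat\Rc$ is to be obtained as the solution of the SRE with coefficients $(\hat\Cw,\hat\Sw,\hat\Tw)$ — which, by hypothesis, are bounded and positive semi-definite — so that the identity $\sRc-K=\hat\Rc$ automatically delivers the ``moreover'' assertion.

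The core step is to verify that the substitution $\sRc=\hat\Rc+K$ (with $\hat\Rc\in\svbset$, so $\df{\sRc}_i=\df{\hat\Rc}_i+\df{K}_i$ and $\dr{\sRc}=\dr{\hat\Rc}+\dr{K}$) transforms SRE~$(\sCw,\sSw,\sTw)$ into SRE~$(\hat\Cw,\hat\Sw,\hat\Tw)$; this is purely pointwise matrix algebra. First, $\Ch(\sRc)=\sCw+\isum D_i^\tr\sRc D_i=(\sCw+\isum D_i^\tr K D_i)+\isum D_i^\tr\hat\Rc D_i=\hat\Cw+\isum D_i^\tr\hat\Rc D_i$, i.e.\ $\Ch$ evaluated at $\sRc$ with datum $\sCw$ equals $\Ch$ evaluated at $\hat\Rc$ with datum $\hat\Cw$. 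Second — and this is exactly where \eqref{2-06} enters — the contribution of $K$ to the bracket $B^\tr\sRc+\isum D_i^\tr(\sRc C_i+\df{\sRc}_i)$ is $B^\tr K+\isum D_i^\tr(K C_i+\df{K}_i)$, which is the transpose of the left-hand side of \eqref{2-06} and hence vanishes; therefore $\Ctr(\sRc,\df{\sRc})$ (built from $\sCw$) coincides with $\Ctr(\hat\Rc,\df{\hat\Rc})$ (built from $\hat\Cw$), and so does the quadratic term $\Ctr^\tr\Ch\,\Ctr$. Third, after these cancellations the only surviving $K$-terms sit in the drift, and $\dr{K}+(A^\tr K+KA)+\isum(C_i^\tr K C_i+C_i^\tr\df{K}_i+\df{K}_i C_i)+\sSw$ is by definition precisely $\hat\Sw$. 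Combining the three pieces, $\dr{\sRc}+\func(\sRc,\df{\sRc})=\dr{\hat\Rc}+\func(\hat\Rc,\df{\hat\Rc})$ (the left side attached to $(\sCw,\sSw)$, the right to $(\hat\Cw,\hat\Sw)$); moreover $\sRc(T)=\sTw\iff\hat\Rc(T)=\sTw-K(T)=\hat\Tw$, the constraints $\Ch(\sRc)>0$ and $\Ch(\hat\Rc)>0$ are the same inequality, and $\sRc\in\svbset\iff\hat\Rc\in\svbset$ since $K\in\svbset$. Hence $\sRc\in\svbset$ solves SRE~$(\sCw,\sSw,\sTw)$ if and only if $\hat\Rc:=\sRc-K\in\svbset$ solves SRE~$(\hat\Cw,\hat\Sw,\hat\Tw)$.

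Now SRE~$(\hat\Cw,\hat\Sw,\hat\Tw)$ has bounded, positive semi-definite coefficients, so Corollary~\ref{cor3-01} applies: case (i) when $\hat\Cw\gg0$, case (ii) when $\hat\Tw\gg0$ and $\isum D_i^\tr D_i\gg0$. This produces a bounded solution $\hat\Rc\in\svbset^{\bd}$. Since that solution is obtained through Theorem~\ref{th3-01} — from the zero subsolution in case (i), and from the subsolution $\iRc=\vf I_n$ constructed in the proof of Theorem~\ref{th3-02} in case (ii), with $\vf>0$ bounded below — it lies in $\svbset^{\bd}_{\ge\iRc}$, so $\hat\Rc\ge0$ and, in case (ii), $\hat\Rc\ge\vf I_n\gg0$. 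Setting $\sRc:=\hat\Rc+K\in\svbset$ then gives, by the equivalence above, a solution of SRE~$(\sCw,\sSw,\sTw)$, and $\sRc-K=\hat\Rc$ is bounded and positive (semi-)definite — uniformly positive definite in case (ii), which is also what secures $\Ch(\sRc)=\hat\Cw+\isum D_i^\tr\hat\Rc D_i\ge\vf\isum D_i^\tr D_i\gg0$ there despite $\hat\Cw$ being merely semi-definite. Uniqueness of $\hat\Rc$, and hence of $\sRc$, within the relevant order class is inherited from Theorem~\ref{th3-01}.

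The main obstacle is the bookkeeping in the second step: one must check that \eqref{2-06} annihilates precisely the cross terms in the feedback map $\Ctr$ and that $\hat\Sw$ is exactly the leftover drift, so that the shifted equation is again a bona fide SRE of the same form; once that identity is in hand the remainder is a direct appeal to Corollary~\ref{cor3-01}. A secondary point requiring care is that $\sRc$ itself need not be bounded (only $\sRc-K$ is), so the conclusion about $\sRc$ must be phrased in $\svbset$ rather than $\svbset^{\bd}$; and the positivity of $\sRc-K$ used in case (ii) to validate the constraint $\Ch(\sRc)>0$ has to be read off from the particular subsolution employed rather than from the bare existence statement of Corollary~\ref{cor3-01}.
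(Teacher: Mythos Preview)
Your proposal is correct and follows exactly the paper's approach: shift by $K$ so that $\sRc=\hat\Rc+K$ reduces SRE~$(\sCw,\sSw,\sTw)$ to SRE~$(\hat\Cw,\hat\Sw,\hat\Tw)$, invoke Corollary~\ref{cor3-01} on the latter, and read off the ``moreover'' from $\sRc-K=\hat\Rc$. Your write-up is in fact more detailed than the paper's (which dispatches the algebraic verification with ``it is easily verified''), and your care in distinguishing how positivity and the constraint $\Ch(\sRc)>0$ are secured in cases (i) versus (ii) is a useful elaboration beyond what the paper spells out.
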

\begin{proof}
According to the assumptions, SRE~$(\hat{\Cw}, \hat{\Sw}, \hat{\Tw})$ admits a solution, denoted by $\hat{\Rc}$.
With \eqref{2-06} in mind, it is easily verified that $\sRc = \hat{\Rc} + K$ is a solution of SRE~$(\sCw, \sSw ,\sTw)$.
Moreover, $\sRc - K = \hat{\Rc}$ is positive definite and bounded.
The proof is complete.
\end{proof}

\begin{remark}\rm
Qian--Zhou~\cite{qian2013existence} introduced a direct approach to deal with a special case that $\hat{\Cw} = 0$ and $d=1$ (that means the Wiener process is one-dimensional).
Proposition~\ref{prp3-02} extends their results into great generality, and thus also recovers those obtained in \cite{hu2003indefinite} (see the comments in \cite[Section~5]{qian2013existence}).
We also note that,
the assumption that $\hat{\Sw}$ and $\hat{\Tw}$ are bounded did not appear in the statement of their main result, i.e., \cite[Theorem~2.2]{qian2013existence}, but was involved actually in their proofs, see Lemmas~3.1, 4.2 and 4.4 there;
in addition, they assumed the boundedness of $\sSw$, $\dr{K}$ and $\df{K}$.
\end{remark}

\section{Proofs}

\subsection{Auxiliary lemmas}

Let us first derive a basic a priori estimate for bounded solutions to SREs.
An analogous result has been obtained by Tang~\cite[Theorem~5.1]{tang2003general} for the definite case.

\begin{lemma}\label{lem4-06}
Let $\Rc\in\svbset^\bd$ be a solution to \eqref{2-01}.
Then, there is a generic constant $\con > 0$, depending only on $T$ and the bounds of $\Rc, A, C$ and $\Sw$, such that
\begin{equation}
\E\int_0^T\big(|\dr{\Rc}(t)| +  |\df{\Rc}(t)|^2\big)\vd t  \le \con.
\end{equation}
\end{lemma}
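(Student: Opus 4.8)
The plan is to use It\^o's formula on the ``energy'' $|\Rc(t)|^2$ (equivalently, $\Tr(\Rc(t)^\tr\Rc(t))$), integrate over $[t,T]$, take expectations, and read off the bound on $\E\int_0^T|\df{\Rc}|^2\vd t$ from the It\^o correction term; the bound on $\E\int_0^T|\dr{\Rc}|\vd t$ then follows directly from the equation $\dr{\Rc} = -\func(\Rc,\df{\Rc})$ once the square-integrability of $\df{\Rc}$ is in hand. Since $\Rc\in\svbset^\bd$ is bounded, say $|\Rc(t)|\le M$ a.e., and $\Rc(T)=\Tw$ is bounded, all the boundary and drift-from-$\Rc$ terms are under control; the only delicate point is to absorb the Riccati quadratic term $\Ctr(\Rc,\df{\Rc})^\tr\Ch(\Rc)\Ctr(\Rc,\df{\Rc})$, which I address below.

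First I would apply It\^o's formula in $\svbset$ (Definition~\ref{def2-03}) to $\vf(\Rc)=|\Rc|^2$, giving, for $t\in[0,T]$,
\begin{equation*}
\E|\Rc(t)|^2 + \E\int_t^T \isum|\df{\Rc}_i(s)|^2\vd s
= \E|\Tw|^2 + 2\,\E\int_t^T \Tr\!\bigl(\Rc(s)^\tr(-\dr{\Rc}(s))\bigr)\vd s,
\end{equation*}
after the martingale part $\int_t^T\isum\Tr(\Rc^\tr\df{\Rc}_i)\vd\BM^i$ vanishes in expectation (its integrand is bounded by $M|\df{\Rc}|\in\ipr^2$, so it is a true martingale). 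Next, using $\dr{\Rc}=-\func(\Rc,\df{\Rc})$, I expand $\Tr(\Rc^\tr\func(\Rc,\df{\Rc}))$. The terms $A^\tr\Rc+\Rc A+\isum C_i^\tr\Rc C_i+\Sw$ and the cross terms $\isum(C_i^\tr\df{\Rc}_i+\df{\Rc}_iC_i)$ are estimated by $\con(1+|\df{\Rc}|)$ using $|\Rc|\le M$ and the $\ipr^\infty$ bounds on $A,C,\Sw$; after Young's inequality, $\con|\Rc||\df{\Rc}|\le \tfrac14|\df{\Rc}|^2+\con$, so these contribute at most $\tfrac14\E\int_t^T|\df{\Rc}|^2\vd s + \con$.

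The main obstacle is the sign and size of the quadratic term $-\Ctr(\Rc,\df{\Rc})^\tr\Ch(\Rc)\Ctr(\Rc,\df{\Rc})$: this term involves $\df{\Rc}$ quadratically through $\Ctr$, and a priori $\Ch(\Rc)^{-1}$ need not be bounded, so a naive estimate would lose control of $|\df{\Rc}|^2$. The key observation is that $\Ch(\Rc)>0$ makes this term \emph{nonpositive}, and $\Tr(\Rc^\tr(\text{nonpositive}))$ is bounded \emph{below} by $-M\cdot|\text{that term}|$ but, crucially, I do not need that direction --- I will instead keep this quadratic term on the favourable side. Indeed, write $G:=\Ctr(\Rc,\df{\Rc})^\tr\Ch(\Rc)\Ctr(\Rc,\df{\Rc})\ge 0$; then $\Tr(\Rc^\tr(-G))=-\Tr(\Rc^\tr G)$ and, since $|\Tr(\Rc^\tr G)|\le M\,\Tr(G)$, this is $\ge -M\,\Tr(G)$, which is the \emph{wrong} sign to absorb. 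The correct route, following Tang~\cite[Theorem~5.1]{tang2003general}, is to exploit that $\Ch(\Rc)\ge \Cw\ge -c_0 I_k$ for the lower bound $c_0$ of $\|\Cw\|$, together with the structural identity that $G$ equals $\bigl[\text{linear in }(\Rc,\df{\Rc})\bigr]^\tr\Ch(\Rc)^{-1}\bigl[\text{same}\bigr]$; comparing with the fact that $\dr{\Rc}+\func=0$ forces $\Ch(\Rc)\Ctr = -[B^\tr\Rc+\isum D_i^\tr(\Rc C_i+\df{\Rc}_i)]$ to have controlled norm, one gets $0\le \Tr(G)\le \con(1+|\df{\Rc}|^2)$ only after noticing a cancellation. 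In fact the clean argument is: the control term in $\func$ is bounded above by $|\Ch(\Rc)^{1/2}\Ctr|^2$, and by completing the square inside the It\^o identity the whole quantity $2\Tr(\Rc^\tr\func)$ minus the good $|\df{\Rc}|^2$ can be bounded by $\con(1+|\Rc|^2)+\tfrac12|\df{\Rc}|^2$ using only $|\Rc|\le M$; I would carry out this completion of the square explicitly, treating $\df{\Rc}$ as the variable. Once the $|\df{\Rc}|^2$-terms are collected on the left with a positive coefficient (at least $\tfrac12$), Gr\"onwall in the $|\Rc|^2$-term (or just the a priori bound $|\Rc|\le M$) closes the estimate with $\con=\con(T,M,\|A\|_\infty,\|C\|_\infty,\|\Sw\|_\infty,\|\Tw\|_\infty)$, and finally $\E\int_0^T|\dr{\Rc}|\vd t\le \E\int_0^T|\func(\Rc,\df{\Rc})|\vd t\le \con(1+\E\int_0^T|\df{\Rc}|^2\vd t)\le\con$ completes the proof.
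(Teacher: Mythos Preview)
Your overall strategy---apply It\^o's formula to a quadratic of $\Rc$, harvest $|\df{\Rc}|^2$ from the quadratic variation, then read off the $\dr{\Rc}$ bound from the equation---is exactly the paper's strategy. You also correctly identify the real obstacle: the Riccati term $G:=\Ctr(\Rc,\df{\Rc})^\tr\Ch(\Rc)\Ctr(\Rc,\df{\Rc})\ge 0$ enters $\Tr(\Rc^\tr\func)$ with an indeterminate sign because $\Rc$ itself need not be nonnegative, and $\Ch(\Rc)^{-1}$ is not bounded, so one cannot control $\Tr(G)$ by $\con(1+|\df{\Rc}|^2)$.

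But your proposed resolution---``completing the square inside the It\^o identity''---is not actually carried out, and does not work: any attempt to bound $|\Tr(\Rc^\tr G)|$ above by $\con(1+|\Rc|^2)+\tfrac12|\df{\Rc}|^2$ must implicitly bound $\Ch(\Rc)^{-1}$, which is unavailable here. The paper's fix is a one-line idea you are missing: apply It\^o's formula not to $|\Rc|^2$ but to $|\Rc+\Rc_\infty|^2$, where $\Rc_\infty:=\|\Rc\|_{L^\infty}I_n$. Then $\Rc+\Rc_\infty\ge 0$, and since $G\ge 0$ one has
\[
\Tr\bigl[(\Rc+\Rc_\infty)G\bigr]=\Tr\bigl[(\Rc+\Rc_\infty)^{1/2}G(\Rc+\Rc_\infty)^{1/2}\bigr]\ge 0,
\]
so the dangerous term enters the energy identity with the \emph{favourable} sign and may simply be discarded. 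The remaining terms are linear in $\df{\Rc}$ and are absorbed by Young's inequality exactly as you describe. For the final step, the bound on $\E\int_0^T|\dr{\Rc}|\vd t$ still requires controlling $\E\int_0^T\Tr(G)\vd t$; the paper gets this not from any pointwise estimate but by integrating the matrix identity $G=\dr{\Rc}+\bigl[A^\tr\Rc+\Rc A+\isum(C_i^\tr\Rc C_i+C_i^\tr\df{\Rc}_i+\df{\Rc}_iC_i)+\Sw\bigr]$ over $[0,T]$ in expectation and using the already-established bound on $\E\int_0^T|\df{\Rc}|^2\vd t$.
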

\begin{proof}
Recall \eqref{2-01} that
\begin{align*}
- \dr{\Rc} = \func(\Rc,\df{\Rc})
= \, &  A^\tr \Rc  + \Rc  A + \tsum_i (C_{i}^\tr \Rc  C_{i} + C_{i}^\tr \df{\Rc}_{i} + \df{\Rc}_{i} C_{i}) + \Sw\\
& - \Ctr(\Rc,\df{\Rc})^\tr \Ch(\Rc)\Ctr(\Rc,\df{\Rc}).
\end{align*}
Denote $P_\infty = \|\Rc\|_{L^\infty(\PS\times[0,T])} I_n$.
Applying It\^o's formula to $|\Rc + \Rc_\infty|^2$, we have
\begin{equation*}
\md |\Rc + \Rc_\infty|^2 = |\df{\Rc}|^2\vd t
+ 2\,\Tr[(\Rc + \Rc_\infty)\dr{\Rc}]\vd t
+ 2\isum \Tr[(\Rc + \Rc_\infty)\df{\Rc}_i]\vd \BM^i_t.
\end{equation*}
Taking expectation and by some standard arguments, we gain
\begin{align*}
\E\int_0^T |\df{\Rc}(t)|^2\vd t
\le \kappa 
+ \con\,\E \int_0^T \big\{ |\df{\Rc}|
- \Tr [(\Rc + \Rc_\infty)\Ctr(\Rc,\df{\Rc})^\tr \Ch(\Rc)\Ctr(\Rc,\df{\Rc})]\big\}(t)\vd t.
\end{align*}
Since $\Rc + \Rc_\infty \ge 0$ and $\Ch(\Rc) > 0$, 
\begin{equation*}
\begin{aligned}
& \Tr [(\Rc + \Rc_\infty)\Ctr(\Rc,\df{\Rc})^\tr \Ch(\Rc)\Ctr(\Rc,\df{\Rc})]\\
& = \Tr [(\Rc + \Rc_\infty)^{1/2}\Ctr(\Rc,\df{\Rc})^\tr \Ch(\Rc)\Ctr(\Rc,\df{\Rc})(\Rc + \Rc_\infty)^{1/2}]
\ge 0.
\end{aligned}
\end{equation*}
Thus, we get
\begin{align*}
\E\int_0^T |\df{\Rc}(t)|^2\vd t
\le \con
+ \con\,\E \int_0^T |\df{\Rc}(t)|\vd t
\le  \frac{1}{2}\,\E\int_0^T |\df{\Rc}(t)|^2\vd t +  \con,
\end{align*}
which yields the estimate for $\df{\Rc}$.
Finally, note that
\begin{equation*}
\begin{aligned}
0\le\;&\E\int_0^T \Ctr(\Rc,\df{\Rc})^\tr \Ch(\Rc)\Ctr(\Rc,\df{\Rc})(t)\vd t\\
\le\;& \E \Rc(T) - \Rc(0)
+ \E\int_0^T \!\!\Big[A^\tr \Rc  + \Rc  A + \tsum_i (C_{i}^\tr \Rc  C_{i} + C_{i}^\tr \df{\Rc}_{i} + \df{\Rc}_{i} C_{i}) + \Sw\Big]\!(t)\vd t\\
\le\;& \con + \con\, \E\int_0^T |\df{\Rc}(t)|^2\vd t \le \con.
\end{aligned}
\end{equation*}
This yields the estimate for $\dr{\Rc}$.
The proof is complete.
\end{proof}

The following result is a key step toward Theorem~\ref{th3-01}, which indicates that a solution of the SRE can be constructed from the solution of a forward-backward stochastic differential equation (FBSDE) system, provided that the latter exists and satisfies some appropriate conditions.

\begin{lemma}\label{lem4-01}
Assume that the following FBSDE system
\begin{equation}\label{4-01}
\left\{\begin{aligned}
& \md X = (AX + BU)\vd t + \isum(C_iX + D_iU)\vd \BM^i_t,
\\
& \md Y = -(A^\tr Y + \isum C_i^\tr Z_i + Q X)\vd t + \isum Z_i\vd \BM^i_t,
\\
& 0 = \Cw U + B^\tr Y + \isum D_i^\tr Z_i,\\
& X(0) = I_n,\quad Y(T) = \Tw X(T)
\end{aligned}\right.
\end{equation}
has a solution
\[
(X,U,Y,Z)\in \cpr^2(\R^{n\times n})\times\ipr^2(\R^{k\times n})\times\cpr^2(\R^{n\times n})\times(\ipr^2(\R^{n\times n}))^d,
\]
moreover, there are a process $K\in\svbset^\bd$ and a constant $\con\in\R_+$ such that 
\begin{equation}\label{4-21}
\begin{gathered}
X^\tr K X \le X^\tr Y \le \con X^\tr X,\\
\Cw + \isum D^\tr_i K D_i \gg 0.
\end{gathered}
\end{equation}
Then, $X^{-1} = \{X(t)^{-1};\,t\in[0,T]\}$ has a continuous version, and
\begin{equation}\label{4-04}
\begin{aligned}
\Rc = YX^{-1}\in\svbset^{\bd}
\end{aligned}
\end{equation}
is a solution of SRE~$(\Cw,\Sw,\Tw)$ with 
\begin{equation}\label{4-04a}
\begin{aligned}
\df{\Rc}_i= Z_i X^{-1} - YX^{-1}(C_i + D_i U X^{-1}),
\end{aligned}
\end{equation}
and $K(t)\le \Rc(t) \le \con I_n$ a.s. for each $t\in [0,T]$.
\end{lemma}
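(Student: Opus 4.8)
\medskip
\noindent\textbf{Proof plan.}
The plan is to verify the conclusion on the (random, relatively open) set on which $X$ is invertible and then to prove that this set is all of $[0,T]$. First I would fix the stopping time $\tau:=\inf\{t\in[0,T]:\det X(t)=0\}$ (with $\inf\emptyset:=+\infty$); since $X$ is continuous with $X(0)=I_n$, one has $\tau>0$ a.s. On $[0,\tau)$ put $\Rc:=YX^{-1}$. Multiplying \eqref{4-21} on the left by $(X^{-1})^\tr$ and on the right by $X^{-1}$ yields $K\le\Rc\le\con I_n$ on $[0,\tau)$; in particular $\Rc$ is bounded there, and since $\isum D_i^\tr\Rc D_i\ge\isum D_i^\tr K D_i$ we get $\Ch(\Rc)=\Cw+\isum D_i^\tr\Rc D_i\ge\Ch(K)\gg0$, so $\Ch(\Rc)^{-1}$ is bounded on $[0,\tau)$ as well. (So that $\Rc$ is $\Sym^n$-valued I use that $X^\tr Y$ is symmetric, which is a standard feature of the Hamiltonian system \eqref{4-01} --- a short computation shows $X^\tr Y-Y^\tr X$ has vanishing drift and terminal value --- and is in any case implicit in the ordering \eqref{4-21}.) Then I would apply It\^o's formula to the product $YX^{-1}$ and substitute the three relations of \eqref{4-01}: this shows $\Rc$ is a continuous semimartingale on $[0,\tau)$ whose $i$-th martingale coefficient is exactly \eqref{4-04a}. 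Right--multiplying the algebraic constraint $0=\Cw U+B^\tr Y+\isum D_i^\tr Z_i$ by $X^{-1}$ and inserting \eqref{4-04a} gives $\Ch(\Rc)(UX^{-1})=-\big[B^\tr\Rc+\isum D_i^\tr(\Rc C_i+\df{\Rc}_i)\big]$, i.e. $UX^{-1}=\Ctr(\Rc,\df{\Rc})$ --- here invertibility of $\Ch(\Rc)$ is essential --- and feeding this back into the drift of $YX^{-1}$ collapses it to $-\func(\Rc,\df{\Rc})$. Thus on $[0,\tau)$ the process $\Rc$ satisfies $\dr{\Rc}+\func(\Rc,\df{\Rc})=0$ and $\Ch(\Rc)>0$, the control is in feedback form $U=\Ctr(\Rc,\df{\Rc})X$, and consequently $X$ itself solves the linear SDE $\md X=aX\vd t+\isum b_iX\vd\BM^i$ on $[0,\tau)$ with $a:=A+B\Ctr(\Rc,\df{\Rc})$, $b_i:=C_i+D_i\Ctr(\Rc,\df{\Rc})$.

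The hard part will be to show $\tau>T$ a.s. On the event $\{\tau\le T\}$ we have $\det X(\tau)=0$. Note that $\int_0^\tau(|a|+\isum|b_i|^2)\vd t<\infty$ a.s.: indeed $\Rc$ and $\Ch(\Rc)^{-1}$ are bounded on $[0,\tau)$, the data $A,B,C,D,\Cw$ are bounded, and $\df{\Rc}\in\ipr^2$. Hence I would introduce the process $N$ solving the linear SDE $\md N=N\big[(\isum b_i^2-a)\vd t-\isum b_i\vd\BM^i\big]$ with $N(0)=I_n$, and check via It\^o's formula that $\md(NX)=0$, so that $NX\equiv I_n$ on $[0,\tau)$. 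A stochastic Gronwall estimate applied to $|N|^2$ --- whose drift is dominated by a constant times $(|a|+\isum|b_i|^2)|N|^2$, and $|a|+\isum|b_i|^2\in L^1([0,\tau])$ a.s. --- yields $\sup_{[0,\tau)}|N|<\infty$ a.s.; the drift and martingale parts of $N$ then both converge as $t\uparrow\tau$, so $N$ extends continuously to $\tau$, and letting $t\uparrow\tau$ in $N(t)X(t)=I_n$ and using continuity of $X$ gives $N(\tau)X(\tau)=I_n$, i.e. $\det X(\tau)\ne0$ --- a contradiction. Therefore $\Prob(\tau\le T)=0$, $X$ is invertible on $[0,T]$, and $X^{-1}$ has a continuous version.

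Granting invertibility on $[0,T]$, the process $\Rc=YX^{-1}$ is globally defined; by the first paragraph it satisfies $\dr{\Rc}+\func(\Rc,\df{\Rc})=0$, $\Ch(\Rc)>0$, and \eqref{4-04a}, and its terminal value is $\Rc(T)=\Tw X(T)X(T)^{-1}=\Tw$, so it solves SRE~$(\Cw,\Sw,\Tw)$; moreover $K(t)\le\Rc(t)\le\con I_n$ a.s. for each $t$, and since $K\in\svbset^\bd$ this makes $\Rc$ bounded. The integrability $\dr{\Rc}\in\ipr^1$, $\df{\Rc}\in\ipr^2$ required by Definition~\ref{def2-03} then follows by applying Lemma~\ref{lem4-06} (or repeating its short proof) to the bounded solution $\Rc$, so that $\Rc\in\svbset^\bd$. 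As indicated, the essential difficulty is the invertibility step of the second paragraph: the feedback coefficients $a,b_i$ are \emph{not} bounded (only $\df{\Rc}\in\ipr^2$), so off-the-shelf results on linear SDEs do not apply, and the argument closes only because \eqref{4-21} supplies the simultaneous boundedness of $\Rc$ and of $\Ch(\Rc)^{-1}$ --- precisely what the stochastic Gronwall estimate needs up to and including $\tau$. The It\^o bookkeeping for $\md(YX^{-1})$ is the classical Bismut--Tang computation \cite{bismut1976linear,tang2003general}, which I would reproduce only schematically.
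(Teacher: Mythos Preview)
Your overall strategy matches the paper's: define $\Rc=YX^{-1}$ on the invertibility set $\itv{0,\tau}$, verify it satisfies the SRE there via the It\^o computation and the algebraic constraint, rewrite $X$ as a linear SDE with feedback coefficients $a=A+B\Ctr(\Rc,\df{\Rc})$, $b_i=C_i+D_i\Ctr(\Rc,\df{\Rc})$, and then argue that such a linear SDE has an invertible fundamental solution, forcing $\tau=T$. The inverse process $N$ you build is exactly the paper's Lemma~\ref{lem4-04}.

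There is, however, a genuine circularity in your invertibility step. You assert ``$\df{\Rc}\in\ipr^2$'' in order to get $\int_0^\tau(|a|+\isum|b_i|^2)\vd t<\infty$ a.s., but at that point $\df{\Rc}$ is only defined on $\itv{0,\tau}$ through the formula $\df{\Rc}_i=Z_iX^{-1}-YX^{-1}(C_i+D_iUX^{-1})$, and $X^{-1}$ may blow up as $t\uparrow\tau$; nothing you have established so far controls $\int_0^{\tau}|\df{\Rc}|^2\vd t$. You only invoke Lemma~\ref{lem4-06} \emph{after} $\tau=T$ is proved, which is too late --- the integrability is needed \emph{for} the proof that $\tau=T$. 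The paper closes this gap by a truncation: it introduces $\tau_m\uparrow\tau$, freezes $\Rc$ after $\tau_m$ to obtain bounded solutions $\Rc^{(m)}$ of truncated SREs on all of $[0,T]$, applies Lemma~\ref{lem4-06} to each $\Rc^{(m)}$ to get a bound $\E\int_0^T|\df{\Rc}^{(m)}|^2\vd t\le\con_1$ \emph{uniform in $m$} (the constant depends only on $\|\Rc\|_{L^\infty}$ and the data), and then uses Fatou to conclude $\E\int_0^\tau|\df{\Rc}|^2\vd t\le\con_1$. Only then are the coefficients of the linear SDE known to be integrable up to $\tau$, and Lemma~\ref{lem4-04} can be applied to the extended equation on $[0,T]$. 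Your sketch needs this uniform-in-$m$ a~priori estimate inserted \emph{before} the construction of $N$; once that is done, the rest of your argument goes through and coincides with the paper's.
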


\begin{remark}\rm
The process $X^\tr Y$ takes values in $\Sym^n$.
Indeed, using It\^o's formula to $X^{\tr}Y$, we have
\begin{equation*}
X(t)^{\tr}Y(t) = \E^{\Filt_t} 
\biggl[
\int_t^T \big( U^\tr \Cw U + X^\tr \Sw X \big)(r)\vd r
+ X(T)^{\tr} \Tw X(T)
\biggr],\quad\forall\,t\in[0,T].
\end{equation*}
Clearly, the right-hand side is an $\Sym^n$-valued random variable.
\end{remark}

Before the rigorous proof, let us do some heuristic computations.
Suppose $X^{-1}$ exists.
Set
\begin{equation}\label{4-03}
\begin{aligned}
\Rc = YX^{-1},\quad
\Df_i = Z_i X^{-1} - YX^{-1}(C_i + D_i U X^{-1}).
\end{aligned}
\end{equation}
By use of the fact that $\md(XX^{-1}) = 0$, we derive the equation of $X^{-1}$ as 
\begin{align*}
\md (X^{-1}) = &
- X^{-1} \Big[ A + B U X^{-1} - \isum (C_i + D_i U X^{-1})^2 \Big] \vd t \\
& - X^{-1}\isum (C_i + D_i U X^{-1})\vd \BM^i_t.
\end{align*}
Thus, with $Z_i X^{-1} = \Df_i + \Rc(C_i + D_i U X^{-1})$ in mind,
we gain that
\begin{equation}\label{4-05}
\begin{aligned}
\md \Rc = \; & \md (YX^{-1}) = - \isum Z_i X^{-1}(C_i + D_i U X^{-1})\vd t
+ Y\vd (X^{-1}) + (\md Y)X^{-1}\\
= \; & - \isum \big[ Z_i X^{-1} - YX^{-1}(C_i + D_i U X^{-1})\big](C_i + D_i U X^{-1})\vd t\\
& - \Big[ A^\tr YX^{-1} + \Sw + \isum C_i^\tr Z_i X^{-1} \Big]\vd t
- YX^{-1} (A + B U X^{-1})\vd t
\\
& + \isum \big[ Z_i X^{-1} - YX^{-1}(C_i + D_i U X^{-1})\big]\vd \BM^i_t\\
=\; & - \Big[ A^\tr P + P A + \isum (C_i^\tr \Rc C_i + C_i^\tr \Df_i + \Df_i C_i) + \Sw
\Big]\vd t\\
& - \Big[\Rc B + \isum (C_i^\tr \Rc D_i + \Df_i D_i)\Big] U X^{-1}\vd t
+ \isum \Df_i \vd \BM^i_t.
\end{aligned}
\end{equation}
On the other hand, it follows from \eqref{4-01} that
\begin{equation*}
\begin{aligned}
0 = \;& \Cw U X^{-1} + B^\tr Y X^{-1} + \isum D_i^\tr Z_i X^{-1}\\
=\;& \Cw U X^{-1} + B^\tr \Rc + \isum D_i^\tr[\Df_i + \Rc(C_i + D_i U X^{-1})]\\
=\;& \Big(\Cw + \isum D_i^\tr \Rc D_i\Big)UX^{-1}
+ B^\tr \Rc + \isum (D_i^\tr\Rc C_i + D_i^\tr \Df_i);
\end{aligned}
\end{equation*}
if $\Cw + \isum D_i^\tr \Rc D_i > 0$, then
\begin{equation}\label{4-06}
UX^{-1} = - \Big(\Cw + \isum D_i^\tr \Rc D_i\Big)^{-1}
\Big[B^\tr \Rc + \isum (D_i^\tr\Rc C_i + D_i^\tr \Df_i)\Big].
\end{equation}
Substituting \eqref{4-06} into \eqref{4-05}, one can find that $(\Rc,\Df)$ defined in \eqref{4-03} satisfies \eqref{1-01a} formally.

\begin{remark}\label{rem4-01}\rm
From \eqref{4-06}, the equation of $X$ can be rewritten as
\begin{equation}\label{4-08}
\md X = [ A + B\varGamma(\Rc,\Df)]X\vd t
+ \isum [ C_i + D_i\varGamma(\Rc,\Df)]X\vd \BM^i_t,\quad
X(0) = I_n,
\end{equation}
as long as $(\Rc,\Df)$ is well-defined, where $\varGamma(\Rc,\Df)$ is defined in \eqref{2-03}.
\end{remark}

We are now in a position to prove Lemma~\ref{lem4-01}. 
The key point, suggested by the above heuristic analysis, is to show the existence and continuity of the reciprocal process of $X$.

\emph{Proof of Lemma~\ref{lem4-01}}.
First of all, $\Rc(0) = Y(0)$ is well-defined.
Recall \eqref{2-10} the definition of $\minev(\cdot)$, and introduce the stopping times:
\begin{align*}
\st_m & = \inf\Big\{t: \minev\big(X^\tr(t)X(t)\big) \le \frac{1}{m}\Big\} \wedge T,
\quad m\in\mathbb{N}^*,\\
\st & = \st_{\infty} = \inf\big\{t: \minev\big(X^\tr(t)X(t)\big) \le 0\big\} \wedge T.
\end{align*}
Clearly, $\st_m \uparrow \st$,
and $X^{-1}$ exists on the set $\itv{0,\st}$, and is bounded on $\itv{0,\st_m}$; $(\Rc,\Df)$ is thus well-defined on $\itv{0,\st}$.
Keeping in mind \eqref{4-21}, we have
\begin{equation}
[X^{\tr}KX](t,\om) 
\le [X^{\tr}\Rc X] (t,\om)
\le \con [X^{\tr}X](t,\om) 
\quad\forall\, (t,\om) \in \itv{0,\st},
\end{equation}
thus,
\begin{equation}\label{4-23}
K(t,\om) 
\le \Rc(t,\om)
\le \con I_n
\quad\forall\, (t,\om) \in \itv{0,\st},
\end{equation}
and furthermore, $\Cw + \isum D_i^\tr \Rc D_i \gg 0$ on $\itv{0,\st}$.
Define
\begin{equation*}
\Rc^{(m)}:= \Rc\ch_{\citv{0,\st_m}} + \Rc(\st_m)\ch_{\oitv{\st_m,T}},
\quad
\Df^{(m)} := \Df\ch_{\citv{0,\st_m}}. 
\end{equation*}
Clearly, $\breve{\Rc}^{(m)} = \Df^{(m)}$.
According to our heuristic computations, $\Rc^{(m)}$ is a bounded solution to 
\[
\text{SRE}~(A^{(m)},B^{(m)},C^{(m)},D^{(m)};\,
\Cw^{(m)}, \Sw^{(m)}, \Tw^{(m)})
\]
with
\begin{equation*}
\begin{array}{llll}
A^{(m)} := A\ch_{\citv{0,\st_m}},
& B^{(m)} := B\ch_{\citv{0,\st_m}},
& C^{(m)} := C\ch_{\citv{0,\st_m}},
& D^{(m)} := D\ch_{\citv{0,\st_m}},\\
\Cw^{(m)} := \Cw,
& \Sw^{(m)} := \Sw\ch_{\citv{0,\st_m}},
& \Tw^{(m)} := \Rc(\st_m).&
\end{array}
\end{equation*}
Define, as in \eqref{2-03}, the corresponding
\begin{equation*}
\Ch^{(m)}(\Rc),\quad 
\Ctr^{(m)}(\Rc,\Df),\quad
\func^{(m)}(\Rc,\Df).
\end{equation*}
By means of Lemma~\ref{lem4-06}, there is a constant $\con_1$ independent of $m$ such that
\[
\E \int_0^T \big(|\Df^{(m)}(t)|^2 + |\func^{(m)}(\Rc^{(m)}(t),\Df^{(m)}(t))| \big) \vd t \le \con_1 <\infty.
\]
Since, as $m\to\infty$,
\begin{equation*}
\Df^{(m)} \to \Df \ch_{\itv{0,\st}},\quad
\func^{(m)}(\Rc^{(m)},\Df^{(m)}) \to
\func(\Rc,\Df)\ch_{\itv{0,\st}} 
\quad\text{a.e. on }
\citv{0,T},
\end{equation*}
by Fatou's lemma we have
\[
\E \int_0^T \ch_{\itv{0,\st}}(t) \big(|\Df(t)|^2 + |\func(\Rc(t),\Df(t))| \big) \vd t\le \con_1 <\infty.
\]
Since
\begin{equation*}
|\Df\ch_{\itv{\st_m,\st}}|\le|\Df\ch_{\itv{0,\st}}|,\quad
|\func(\Rc(t),\Df(t))\ch_{\itv{\st_m,\st}}|\le|\func(\Rc(t),\Df(t))\ch_{\itv{0,\st}}|,
\end{equation*}
it follows from Lebesgue's dominated convergence theorem that
\begin{align*}
& \lim_{m\to \infty}\E \int_0^T \ch_{\itv{0,\st}}(t)|\Df^{(m)}(t) - \Df(t)|^2 \vd t
= 0,\\
& \lim_{m\to \infty}\E \int_0^T \ch_{\itv{0,\st}}(t) |\func^{(m)}(\Rc^{(m)}(t),\Df^{(m)}(t))-\func(\Rc(t),\Df(t))| \vd t
= 0.
\end{align*}
Therefore, we obtain
\begin{gather*}
\int_0^T \isum \Df^{(m)}_i(t)\vd \BM^i_t \to
\int_0^T \isum \ch_{\itv{0,\st}} \Df_i(t)\vd \BM^i_t\quad\text{a.s.},\\
\int_0^T \func^{(m)}(\Rc^{(m)}(t),\Df^{(m)}(t))\vd t \to
\int_0^T \ch_{\itv{0,\st}} \func(\Rc(t),\Df(t))\vd t\quad\text{a.s.}
\end{gather*}
Define
\begin{equation*}
\Tw^{(\infty)} := \Rc(0) - \int_0^T \ch_{\itv{0,\st}} \func(\Rc(t),\Df(t))\vd t
+ \int_0^T \isum \ch_{\itv{0,\st}} \Df_i(t)\vd \BM^i_t,
\end{equation*}
and
\begin{equation*}
\Rc^{(\infty)} := \Rc\ch_{\itv{0,\st}} + \Tw^{(\infty)}\ch_{\citv{\st,T}},
\quad
\Df^{(\infty)} :=\Df\ch_{\itv{0,\st}},
\end{equation*}
and
\begin{equation*}
\begin{array}{llll}
A^{(\infty)} := A\ch_{\citv{0,\st}},
& B^{(\infty)} := B\ch_{\citv{0,\st}},
& C^{(\infty)} := C\ch_{\citv{0,\st}},\\
D^{(\infty)} := D\ch_{\citv{0,\st}},
& \Cw^{(\infty)} := \Cw,
& \Sw^{(\infty)} := \Sw\ch_{\citv{0,\st}},&
\end{array}
\end{equation*}
and the corresponding
\begin{equation*}
\Ch^{(\infty)}(\Rc),\quad 
\Ctr^{(\infty)}(\Rc,\Df),\quad
\func^{(\infty)}(\Rc,\Df).
\end{equation*}
Then $\Rc^{(\infty)}$ solves SRE $(A^{(\infty)},B^{(\infty)},C^{(\infty)},D^{(\infty)}; \Cw^{(\infty)},\Sw^{(\infty)},\Tw^{(\infty)})$, with $\df{\Rc}^{(\infty)} = \Df^{(\infty)}$;
moreover, by the trajectory-continuity of $\Rc^{(\infty)}$ we know $\Tw^{(\infty)}$ is bounded, that means $\Rc^{(\infty)} \in \svbset^\bd$.
Also, remember that 
\[\Ch^{(\infty)}(\Rc^{(\infty)}) = \Cw^{(\infty)} + \isum (D^{(\infty)}_i)^\tr \Rc^{(\infty)} D^{(\infty)}_i \gg 0,\]
thus $\varGamma^{(\infty)}(\Rc^{(\infty)},\Df^{(\infty)})\in \ipr^2(\R^{k\times n})$.

Let us consider the following SDE over the time horizon $[0,T]$:
\begin{equation}\label{4-07}
\begin{aligned}
\md X^{(\infty)} =\;& (A^{(\infty)} + B^{(\infty)}\varGamma^{(\infty)}(\Rc^{(\infty)},\Df^{(\infty)}))X^{(\infty)}\vd t\\
& + \isum (C_i^{(\infty)}
+ D_i^{(\infty)}\varGamma^{(\infty)}(\Rc^{(\infty)},\Df^{(\infty)}))X^{(\infty)}\vd \BM^i_t,\\
X^{(\infty)}(0) =\;& I_n.
\end{aligned}
\end{equation}
We need the following result whose proof will be given later.

\begin{lemma}\label{lem4-04}
Let $\vA,\vC_i$ $(i=1,\dots,d)$ be $\R^{n\times n}$-valued adapted processes such that 
$$\int_0^\infty \Big(|\vA(t)|+\isum|\vC_i(t)|^2\Big)\vd t < \infty\quad\text{a.s.}$$
Then, the following SDE
\begin{align}\label{4-25}
\md \vX = \vA \vX \vd t + \isum \vC_i \vX \vd \BM^i_t,\quad
\vX(0) = M\in \R^{n\times m}
\end{align}
has a unique strong solution. 
Moreover, when $m=n$ and $M=I_n$, $\vX^{-1} = \{\vX(t)^{-1};t\ge 0\}$ 
exists and is a continuous process.
\end{lemma}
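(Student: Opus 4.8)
The plan is to absorb the unboundedness of the coefficients by a localization argument, reducing to the case of a \emph{deterministic} bound on the time-integrated size of the coefficients, and then to run a contraction (Picard) argument in a time-weighted space. First I would introduce the continuous nondecreasing adapted process $A_t := \int_0^t (|\vA(s)| + \isum |\vC_i(s)|^2)\vd s$, which is finite a.s. by hypothesis, and the stopping times $\st_N := \inf\{t : A_t \ge N\}$ for $N\in\mathbb{N}$; since $A_\infty < \infty$ a.s., $\st_N \uparrow \infty$ a.s. Writing $\vA^N := \vA\,\ch_{\citv{0,\st_N}}$ and $\vC_i^N := \vC_i\,\ch_{\citv{0,\st_N}}$, these truncated coefficients satisfy the \emph{deterministic} estimate $\int_0^\infty (|\vA^N(t)| + \isum |\vC_i^N(t)|^2)\vd t \le N$ a.s., and it suffices to solve \eqref{4-25}, with $(\vA,\vC_i)$ replaced by $(\vA^N,\vC_i^N)$, uniquely for every $N$.

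For fixed $N$, set $A^N_t := \int_0^t (|\vA^N(s)| + \isum |\vC_i^N(s)|^2)\vd s$, so $A^N_\infty \le N$, and equip the space of continuous $\filt$-adapted $\R^{n\times m}$-valued processes $Y$ with the norm $\|Y\|_\lambda := (\E \sup_{t\ge 0} \me^{-\lambda A^N_t} |Y(t)|^2)^{1/2}$, $\lambda>0$ to be chosen; this is a Banach space. I would show that $\Phi(Y)(t) := M + \int_0^t \vA^N Y\vd s + \isum \int_0^t \vC_i^N Y\vd \BM^i_s$ is a contraction on it: using $\int_0^t |\vA^N Y|\vd s \le \int_0^t |Y|\vd A^N_s$ and $\int_0^t \isum |\vC_i^N Y|^2\vd s \le \int_0^t |Y|^2\vd A^N_s$ together with the Cauchy--Schwarz and Burkholder--Davis--Gundy inequalities, one gets $\|\Phi(Y_1)-\Phi(Y_2)\|_\lambda \le (\con/\sqrt{\lambda})\,\|Y_1-Y_2\|_\lambda$ with $\con$ depending only on $n,d,N$, hence a contraction for $\lambda$ large; its fixed point is the unique strong solution $\vX^N$ of the $N$-truncated equation on $[0,\infty)$. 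Applying the same weighted estimate to the difference of two solutions of the truncated equation (which solves the homogeneous version with zero initial datum) also yields uniqueness directly.

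It remains to patch and to invert. For $N < N'$, restricting $\vX^{N'}$ to $\citv{0,\st_N}$ solves the $\st_N$-problem, so by the uniqueness just established $\vX^{N'} = \vX^N$ on $\citv{0,\st_N}$; since $\st_N \uparrow \infty$ a.s., the $\vX^N$ glue into a single continuous adapted process $\vX$ solving \eqref{4-25} on $[0,\infty)$, and any solution of \eqref{4-25} agrees with $\vX$ on every $\citv{0,\st_N}$, hence equals $\vX$ --- giving global existence and uniqueness. For the invertibility claim ($m=n$, $M=I_n$), I would consider the auxiliary equation $\md \hat{X} = \hat{X}(-\vA + \isum \vC_i^2)\vd t - \isum \hat{X}\vC_i\vd \BM^i_t$, $\hat{X}(0)=I_n$; since $|\vC_i^2|\le|\vC_i|^2$, its coefficients satisfy the same hypothesis, and transposing recasts it in the form \eqref{4-25}, so the above provides a unique continuous solution $\hat{X}$. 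It\^o's product rule, taking into account the cross-variation $\isum (-\hat{X}\vC_i)(\vC_i\vX)\vd t = -\isum \hat{X}\vC_i^2\vX\vd t$, gives $\md(\hat{X}\vX)=0$, whence $\hat{X}(t)\vX(t)=I_n$ for all $t$; as both factors are square matrices this forces $\vX(t)$ to be invertible with $\vX(t)^{-1}=\hat{X}(t)$, a continuous process.

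The step I expect to be the main obstacle is the contraction estimate in the second paragraph: because $\vA^N$ is merely integrable (not bounded) in time, the usual Gronwall bookkeeping against $\vd s$ must be systematically replaced by bookkeeping against $\vd A^N_s$, and one must check that both the drift contribution and the Burkholder--Davis--Gundy term for the stochastic integral are absorbed by the exponential weight $\me^{-\lambda A^N_t}$ once $\lambda$ is large. This is the only place where a genuine (though standard) computation is needed; everything else is bookkeeping with localization and It\^o's formula.
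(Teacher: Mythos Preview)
Your argument is correct. For the invertibility part you use exactly the paper's idea: introduce the right-multiplicative auxiliary process $\hat X$ (the paper's $\vvX$) and use It\^o's product rule. The only cosmetic difference is that you compute $\hat X\,\vX$ and obtain $\md(\hat X\,\vX)=0$ directly, while the paper computes $\vX\,\vvX$, obtains a nontrivial linear SDE for which $I_n$ is visibly a solution, and then invokes uniqueness; both orderings yield a one-sided inverse, which for square matrices is two-sided.

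Where you genuinely differ is in the existence and uniqueness of the strong solution: the paper simply cites Gal'chuk~\cite{gal1979existence} (via \cite{tang2003general}), whereas you supply a self-contained proof by localizing the integrated coefficient process and running a Picard contraction in the weighted norm $\|Y\|_\lambda=(\E\sup_t \me^{-\lambda A^N_t}|Y(t)|^2)^{1/2}$. This is a standard and valid alternative; the weight against $\md A^N_s$ is exactly what compensates for $\vA^N$ being merely $L^1$ in time. One small point worth tightening when you write it out: the BDG step does not immediately produce a factor $1/\sqrt{\lambda}$ in front of the stochastic integral unless you split the exponential weight (e.g.\ bound $\int_0^\infty \me^{-\lambda A^N_s}|Y|^2\,\md A^N_s$ by $\sup_s \me^{-\lambda A^N_s/2}|Y|^2\cdot\int_0^\infty \me^{-\lambda A^N_s/2}\,\md A^N_s\le (2/\lambda)\sup_s \me^{-\lambda A^N_s/2}|Y|^2$ and adjust the norm to use the half-exponent), or alternatively apply It\^o's formula to $\me^{-\lambda A^N_t}|X_t|^2$ directly; either route works and gives the contraction for large $\lambda$. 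Your patching and uniqueness-by-localization are fine.
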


By means of Lemma~\ref{lem4-04}, $(X^{(\infty)})^{-1}$ is a continuous process, thus $X^{(\infty)}(\tau)$ is invertible a.s.
On the other hand, in view of Remark~\ref{rem4-01}, \eqref{4-07} coincides with \eqref{4-08} on $\itv{0,\st}$.
Thus $X^{(\infty)} = X$ a.e. on $\itv{0,\st}$.
By the trajectory-continuity of solutions of SDEs,
\[
X(\st) = X^{(\infty)}(\st)\quad\text{a.s.},
\]
so $X(\st)$ is invertible a.s. 
Recalling the definition of $\st$, we gain that
\[
\Prob(\tau = T) = 1.
\]
Therefore, $X^{-1} = (X^{(\infty)})^{-1}$ is a continuous process; $(\Rc,\Df)$ given in \eqref{4-03} is then well-defined on $\citv{0,T}$, solving SRE \eqref{1-01}. Clearly, $\df{\Rc}= \Df$,
and from \eqref{4-23} we have $K\le \Rc \le \con I_n$.
The proof of Lemma~\ref{lem4-01} is complete.
\quad\ensuremath{\square}

\emph{Proof of Lemma~\ref{lem4-04}.}
The existence and uniqueness of the strong solution to \eqref{4-25} follows from a well-known result due to Gal'chuk~\cite[basic theorem on pp. 756--757]{gal1979existence} (see~\cite[Lemma~7.1]{tang2003general} for more related formulation). It remains to show the invertibility of $\vX$ when $m=n$ and $M=I_n$.
Note that the SDE
\begin{align*}
\md \vvX = - \vvX\Big(\vA - \isum \vC_i\vC_i\Big) \vd t - \vvX\isum \vC_i  \vd \BM^i_t,\quad
\vvX(0) = I_n
\end{align*}
also has a unique (continuous) strong solution.
Then, $V = \vX\vvX$ satisfies
\begin{align*}
\md V = \Big[ \vA V - V\vA + \isum (V \vC_i - \vC_i V)\vC_i \Big]\vd t + \isum (\vC_i V - V\vC_i) \vd \BM^i_t,\quad
V(0) = I_n.
\end{align*}
The uniqueness of the solution implies $V=I_n$,
that means $\vX^{-1} = \vvX$.\quad\ensuremath{\square}

The following lemma and its proof collect some computations that are useful in the proofs of our main results.

\begin{lemma}\label{lem4-10}
Let $\iRc \in \svbset^\bd$ be a subsolution to SRE~$(\Cw - \eps I_k, \Sw, \Tw)$ with $\eps \ge 0$, 
and $x=x(\cdot)$ be the solution to \eqref{1-004b} with $\xi\in\R^n$ and $u\in \ipr^2(\R^k)$.
Then
\begin{equation*}
\xi^\tr \iRc(0) \xi + \eps\,\E \int_0^{T} |u(t)|^2\vd t \le J(u;\xi),
\end{equation*}
where $J(u;\xi)$ is defined in \eqref{cost}.
\end{lemma}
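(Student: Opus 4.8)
The plan is to apply It\^o's formula to the scalar semimartingale $x(t)^\tr\iRc(t)x(t)$ along the state trajectory of \eqref{1-004b}, and to convert the three defining properties of a subsolution in \eqref{2-05} (the differential inequality, the positivity $\Ch(\iRc)>0$, and the terminal bound $\iRc(T)\le\Tw$) into the three ingredients of the claimed lower bound for $J(u;\xi)$.

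First I would expand $\md\bigl(x^\tr\iRc x\bigr)$ using the dynamics \eqref{1-004b} together with the decomposition $\md\iRc=\dr{\iRc}\vd t+\isum\df{\iRc}_i\vd\BM^i_t$ of Definition~\ref{def2-03}. Grouping the resulting drift according to its dependence on $x$ and $u$, it equals the sum of a term quadratic in $x$, namely $x^\tr\bigl[\dr{\iRc}+A^\tr\iRc+\iRc A+\isum(C_i^\tr\iRc C_i+C_i^\tr\df{\iRc}_i+\df{\iRc}_i C_i)\bigr]x$, a bilinear term $2u^\tr\bigl[B^\tr\iRc+\isum D_i^\tr(\iRc C_i+\df{\iRc}_i)\bigr]x$, and a term $u^\tr\bigl(\isum D_i^\tr\iRc D_i\bigr)u$ quadratic in $u$. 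Writing $\Ch(\cdot)$ and $\Ctr(\cdot,\cdot)$ for the quantities in \eqref{2-03} attached to the shifted data $(A,B,C,D;\Cw-\eps I_k,\Sw)$, the differential inequality in \eqref{2-05} gives $\dr{\iRc}+A^\tr\iRc+\iRc A+\isum(C_i^\tr\iRc C_i+C_i^\tr\df{\iRc}_i+\df{\iRc}_i C_i)\ge-\Sw+\Ctr(\iRc,\df{\iRc})^\tr\Ch(\iRc)\Ctr(\iRc,\df{\iRc})$. Combining this with the identities $B^\tr\iRc+\isum D_i^\tr(\iRc C_i+\df{\iRc}_i)=-\Ch(\iRc)\Ctr(\iRc,\df{\iRc})$ and $\isum D_i^\tr\iRc D_i=\Ch(\iRc)-\Cw+\eps I_k$, which follow at once from \eqref{2-03}, a completion of the square shows that the drift of $x^\tr\iRc x$ is bounded below by $-x^\tr\Sw x-u^\tr\Cw u+\eps|u|^2+\bigl(u-\Ctr(\iRc,\df{\iRc})x\bigr)^\tr\Ch(\iRc)\bigl(u-\Ctr(\iRc,\df{\iRc})x\bigr)$; since $\Ch(\iRc)>0$ by \eqref{2-05}, the last summand is nonnegative, so the drift dominates $-x^\tr\Sw x-u^\tr\Cw u+\eps|u|^2$.

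Next I would integrate this differential inequality over $\citv{0,\st_N}$ with $\st_N=\inf\{t:|x(t)|\ge N\}\wedge T$, so that the It\^o-integral part has zero expectation; this yields $\E\bigl[x(\st_N)^\tr\iRc(\st_N)x(\st_N)\bigr]-\xi^\tr\iRc(0)\xi\ge\E\int_0^{\st_N}\bigl(-x^\tr\Sw x-u^\tr\Cw u+\eps|u|^2\bigr)\vd t$. Because $\iRc\in\svbset^\bd$ is bounded and $x\in\cpr^2(\R^n)$ (so $\E\sup_{t\le T}|x(t)|^2<\infty$) while $u\in\ipr^2(\R^k)$ and $\Cw,\Sw$ are bounded, the boundary term is dominated by an integrable random variable and the integrand, up to the nonnegative summand $\eps|u|^2$, by an integrable process; letting $N\to\infty$, using that $\st_N\uparrow T$ a.s. by pathwise continuity of $x$, and invoking dominated (resp.\ monotone) convergence gives $\E\bigl[x(T)^\tr\iRc(T)x(T)\bigr]-\xi^\tr\iRc(0)\xi\ge\eps\,\E\int_0^T|u|^2\vd t-\E\int_0^T\bigl(x^\tr\Sw x+u^\tr\Cw u\bigr)\vd t$. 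Finally, the terminal bound $\iRc(T)\le\Tw$ gives $x(T)^\tr\iRc(T)x(T)\le x(T)^\tr\Tw x(T)$, and substituting and rearranging yields $\xi^\tr\iRc(0)\xi+\eps\,\E\int_0^T|u|^2\vd t\le\E\bigl[x(T)^\tr\Tw x(T)+\int_0^T(u^\tr\Cw u+x^\tr\Sw x)\vd t\bigr]=J(u;\xi)$, which is the assertion.

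The algebra of reorganizing the It\^o drift and recognizing the completed square $\bigl(u-\Ctr(\iRc,\df{\iRc})x\bigr)^\tr\Ch(\iRc)\bigl(u-\Ctr(\iRc,\df{\iRc})x\bigr)$ is the conceptual core, but is routine. The only genuine technical obstacle is the passage to the expectation: one must either argue directly that the stochastic integral $\isum\int_0^\cdot\bigl[x^\tr\df{\iRc}_ix+2x^\tr\iRc(C_ix+D_iu)\bigr]\vd\BM^i_t$ is a true martingale or, as above, localize and then justify the limit $N\to\infty$ under the expectation, which is precisely where the boundedness of $\iRc$ and the $\cpr^2$- and $\ipr^2$-integrability of $x$ and $u$ enter.
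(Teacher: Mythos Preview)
Your proof is correct and follows essentially the same route as the paper: It\^o's formula applied to $x^\tr\iRc x$, completion of the square using the subsolution inequality and $\Ch(\iRc)>0$ (with $\Ch,\Ctr$ taken for the shifted weight $\Cw-\eps I_k$), localization by the stopping times $\st_N=\inf\{t:|x(t)|\ge N\}\wedge T$, and passage to the limit via dominated convergence together with the terminal bound $\iRc(T)\le\Tw$. The only cosmetic difference is that the paper keeps the nonnegative term $x^\tr(\dr{\iRc}+\func_\eps)x$ explicit in the integrated identity before discarding it, whereas you drop both nonnegative contributions already at the drift level; the argument is the same.
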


\begin{proof}
It follows from It\^o's formula that
\begin{equation}\label{4-11}
\begin{aligned}
\md (x^\tr \iRc  x) =\;& 
x^\tr \Big[ \dr{\iRc } + A^\tr \iRc  + \iRc A + \isum (C^\tr_i \iRc  C_i + C^\tr_i \df{\iRc }_i + \df{\iRc }_i C_i) \Big] x \vd t\\
& + x^\tr \Big[ \iRc B + \isum (C^\tr_i \iRc + \df{\iRc }_i) D_i \Big] u \vd t\\
& + u^\tr \Big[ B^\tr \iRc  + \isum  D_i^\tr(\iRc C_i + \df{\iRc }_{i}) \Big] x
+ \isum u^\tr D^\tr_i \iRc D_i u \vd t\\
& + \isum \Big[ x^\tr (\df{\iRc }_{i} + C_i^\tr \iRc  + \iRc  C_i )x
+ x^\tr \iRc  D_i u + u^\tr D^\tr_i \iRc  x  \Big]\vd \BM^i_t.
\end{aligned}
\end{equation}
Since $\iRc\in\svbset^\bd$ is a subsolution to SRE~$(\Cw - \eps I_k, \Sw ,\Tw)$, i.e.,
\begin{equation}\label{4-24}
\begin{aligned}
- \dr{\iRc } \le \func_\eps :=\;& A^\tr \iRc  + \iRc A + \isum (C^\tr_i \iRc  C_i + C^\tr_i \df{\iRc }_i + \df{\iRc }_i C_i) + \Sw\\
& - \Big[ \iRc B + \isum (C^\tr_i \iRc + \df{\iRc }_i) D_i \Big]\Big( \Cw - \eps I_k + \isum D^\tr_i \iRc D_i\Big)^{-1} \\
& \times\Big[ B^\tr \iRc  + \isum  D_i^\tr(\iRc C_i + \df{\iRc }_i) \Big],
\end{aligned}
\end{equation}
by the method of completing the square, we can derive from \eqref{4-11} that
\begin{equation}\label{4-12}
\begin{aligned}
\md (x^\tr \iRc  x) =\;& (\dots)\vd \BM_t - (u^\tr \Cw u + x^\tr \Sw x \big)\vd t + \big[\eps |u|^2 + x^\tr ( \dr{\iRc } + \func_\eps ) x\big]\vd t \\
&  + (u - \varGamma_\eps x)^\tr \Ch_{\eps} (u - \varGamma_\eps x)\vd t,
\end{aligned}
\end{equation}
where
\begin{equation*}
\begin{aligned}
\Ch_\eps := \;& \Cw - \eps I_k + \isum D^\tr_i \iRc D_i > 0,\\
\varGamma_\eps := \;& \Ch_\eps^{-1} \Big[ B^\tr \iRc  + \isum  D_i^\tr(\iRc C_i + \df{\iRc }_i) \Big].
\end{aligned}
\end{equation*}
Because it is not clear whether the It\^o integral term is a martingale, we define the stopping times:
\[
\si_{\! m} = \inf\{t: |x(t)| \ge m\}\wedge T.
\]
Clearly, $\si_{\! m} \uparrow T$.
Note that $x(\cdot)$ is bounded on $\citv{0,\si_{\! m}}$.
Integrating \eqref{4-12} on $\citv{0,\si_{\! m}}$, we have
\begin{equation}\label{4-13}
\begin{aligned}
\E & \int_0^{\si_{\! m}}\!\! \big( u^\tr \Cw u + x^\tr \Sw x  \big)(t)\vd t
+ \E\big[ x(\si_{\! m})^\tr \iRc(\si_{\! m}) x(\si_{\! m}) \big]\\
= & \; \xi^\tr \iRc(0) \xi + \E \int_0^{\si_{\! m}}\!\! \big[\eps |u|^2 + x^\tr (\dr{\iRc } +\func_\eps) x + (u - \varGamma_\eps x)^\tr \Ch_{\eps} (u - \varGamma_\eps x) \big](t)\vd t.
\end{aligned}
\end{equation}
On one hand, by the trajectory-continuity of $x$ and $\iRc$, and Lebesgue's dominated convergence theorem, we get
\[
\lim_{m\to\infty} \E\big[x(\si_{\! m})^\tr \iRc(\si_{\! m}) x(\si_{\! m})\big] = \E\big[x(T)^\tr \iRc(T) x(T)\big]
\le\E\big[ x(T)^\tr \Tw x(T)\big].
\] 
On the other hand, we know
\[
x^\tr (\dr{\iRc } +\func_\eps) x + (u - \varGamma_\eps x)^\tr \Ch_{\eps} (u - \varGamma_\eps x)
\ge 0.
\]
Therefore, letting $m\to\infty$ in \eqref{4-13}, we have 
\begin{equation*}
\begin{aligned}
\xi^\tr \iRc(0) \xi + \eps\,\E \int_0^{T} |u(t)|^2\vd t
\le \E\bigg[ x(T)^\tr \Tw x(T) 
+ \int_0^{T}\!\! \big( u^\tr \Cw u + x^\tr \Sw x  \big)(t)\vd t\bigg].
\end{aligned}
\end{equation*}
The lemma is proved.
\end{proof}

\subsection{Proof of Theorem~\ref{th3-00}}

Let $\iRc\in\svbset^{\bd}$ be a subsolution of SRE~$(\Cw,\Sw,\Tw)$.
Then applying Lemma~\ref{lem4-10} with $\eps = 0$ and $\xi=0$,
we know that $J(u;0) \ge 0$ for any $u\in \ipr^2(\R^k)$, thus $V(0) \ge 0$, which concludes Theorem~\ref{th3-00}.

\subsection{Proof of Theorem~\ref{th3-01}}

The proof is divided into the following four steps.

\emph{Step 1.}
We shall prove that SLQ~$(\Cw,\Sw,\Tw)$ is solvable.

The argument is similar to \cite[the proof of Theorem~3.1]{bismut1976linear}.
Fix $\xi\in\R^n$.
Recalling \eqref{4-13}, its right-hand side is convex in $u$ and $x$, while $x$ is linear in $u$, thus
\[
\E \int_0^{\si_{\! m}} \big( u^\tr \Cw u + x^\tr \Sw x  \big)(t)\vd t
+ \E\big[ x(\si_{\! m})^\tr \iRc(\si_{\! m}) x(\si_{\! m}) \big]
\]
is convex in $u$.
Sending $m\to \infty$ implies that $J(u;\xi)$ is convex in $u$.
Clearly, $J(\,\cdot\,;\xi)$ is continuous on $\ipr^2(\R^k)$.
Moreover, when $\|u\|_{L^2}\to \infty$,
$J(u;\xi) \to +\infty$ by Lemma~\ref{lem4-10},
that implies, when $\al$ is large enough, $\{u: J(u)\le \al\}$ is convex and weakly compact.
Then from a well-known result (cf.~\cite[Proposition~2.1.2]{ekeland1999convex}), $J(\,\cdot\,;\xi)$ has an optimum.

\emph{Step 2.}
We shall prove the existence of the solution to SRE~$(\Cw,\Sw,\Tw)$.

To apply Lemma~\ref{lem4-01}, let us first prove the following result.

\begin{lemma}\label{lem4-05}
Under the assumption of Theorem~\ref{th3-01}, for any 
$s\in[0,T)$ and $\xi\in L^2(\PS,\Filt_s,\R^n)$,
the following FBSDE system
\begin{equation}\label{4-09}
\left\{\begin{aligned}
& \md x^{s,\xi}  = (Ax^{s,\xi} + Bu^{s,\xi})\vd t + \isum(C_ix^{s,\xi} + D_iu^{s,\xi})\vd \BM^i_t,\\
& \md y^{s,\xi}  = -(A^\tr y^{s,\xi} + \isum C_i^\tr z^{s,\xi}_i + Q x^{s,\xi})\vd t + \isum z^{s,\xi}_i\vd \BM^i_t,\\
& \Cw u^{s,\xi}  + B^\tr y^{s,\xi} + \isum D_i^\tr z^{s,\xi}_i = 0,\\
& x^{s,\xi}(s)  = \xi,\quad
y^{s,\xi}(T) = \Tw x^{s,\xi}(T)
\end{aligned}\right.
\end{equation}
admits a unique solution $(x^{s,\xi},u^{s,\xi},y^{s,\xi},z^{s,\xi})$ such that
\begin{equation}\label{4-20}
x^{s,\xi},y^{s,\xi}\in \cpr^2(s,T;\R^n),\quad
z^{s,\xi} \in \ipr^2(s,T;\R^{n\times d}),\quad
u^{s,\xi} \in \ipr^2(s,T;\R^k);
\end{equation}
moreover, 
\begin{equation}\label{4-26}
\xi^\tr y^{s,\xi}(s) \ge \xi^\tr \iRc(s)\xi\quad\text{a.s.},
\end{equation}
and 
there is a generic constant $\con_0 > 0$, depending only on $\eps,T, \iRc,A,B,C,D,\Cw,\Sw$ and $\Tw$, such that
\begin{equation}\label{4-10}
\begin{aligned}
\E\biggl[\sup_{t\in[s,T]} \!|x^{s,\xi}(t)|^2 
+ \sup_{t\in[s,T]} \!|y^{s,\xi}(t)|^2 + \int_s^T\!\!\! \big(|u^{s,\xi}(t)|^2 + |z^{s,\xi}(t)|^2\big) \md t\biggr]
\le \con_{0} \E |\xi|^2.
\end{aligned}
\end{equation}
\end{lemma}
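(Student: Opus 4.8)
The plan is to realise the quadruple $(x^{s,\xi},u^{s,\xi},y^{s,\xi},z^{s,\xi})$ as the optimality system of the SLQ problem on $[s,T]$ with initial datum $\xi$, and then to read off uniqueness and the estimates \eqref{4-10} and \eqref{4-26} from the coercivity furnished by Lemma~\ref{lem4-10}. For existence, write $J_s(u;\xi):=\E[x(T)^\tr\Tw x(T)+\int_s^T(u^\tr\Cw u+x^\tr\Sw x)\vd t]$ for the cost on $[s,T]$, with state $x$ solving \eqref{1-004b} on $[s,T]$ from $x(s)=\xi$. By the argument of Step~1 above (now on $[s,T]$ and with $\xi\in L^2(\PS,\Filt_s,\R^n)$), $u\mapsto J_s(u;\xi)$ is convex and continuous on $\ipr^2(s,T;\R^k)$, and by Lemma~\ref{lem4-10} applied on $[s,T]$ with the given $\eps>0$ it satisfies $J_s(u;\xi)\ge\E[\xi^\tr\iRc(s)\xi]+\eps\|u\|_{\ipr^2}^2$, hence is coercive; so (cf.\ \cite[Proposition~2.1.2]{ekeland1999convex}) it admits a minimiser $u^{s,\xi}$, with associated state $x^{s,\xi}\in\cpr^2(s,T;\R^n)$. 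Let $(y^{s,\xi},z^{s,\xi})\in\cpr^2(s,T;\R^n)\times\ipr^2(s,T;\R^{n\times d})$ solve the linear BSDE in \eqref{4-09}; this is well-posed by classical linear BSDE theory, since all coefficients are bounded and the inhomogeneity $\Sw x^{s,\xi}$ and terminal value $\Tw x^{s,\xi}(T)$ are square integrable. Perturbing $u^{s,\xi}$ to $u^{s,\xi}+\theta v$ with $v\in\ipr^2(s,T;\R^k)$, differentiating $J_s(\cdot;\xi)$ at $\theta=0$, and applying It\^o's formula to the product of $y^{s,\xi}$ with the corresponding variation of the state (after the usual localisation making the stochastic integrals true martingales), the vanishing of the directional derivative at the minimiser yields $\E\int_s^T v^\tr(\Cw u^{s,\xi}+B^\tr y^{s,\xi}+\isum D_i^\tr z_i^{s,\xi})\vd t=0$ for all $v$, hence the algebraic relation of \eqref{4-09} a.e. Thus $(x^{s,\xi},u^{s,\xi},y^{s,\xi},z^{s,\xi})$ solves \eqref{4-09} and lies in the spaces \eqref{4-20}.

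For the bound \eqref{4-10}, since $u\equiv0$ is admissible, optimality and Lemma~\ref{lem4-10} give $\E[\xi^\tr\iRc(s)\xi]+\eps\,\E\int_s^T|u^{s,\xi}|^2\vd t\le J_s(u^{s,\xi};\xi)\le J_s(0;\xi)$; bounding the uncontrolled state by a standard linear SDE estimate and using the boundedness of $\Sw$, $\Tw$ and $\iRc$ yields $\E\int_s^T|u^{s,\xi}|^2\vd t\le\con\,\E|\xi|^2$. Substituting this into the forward SDE estimate for $x^{s,\xi}$ and then into the a priori estimate for the linear BSDE governing $(y^{s,\xi},z^{s,\xi})$ produces \eqref{4-10}, with the asserted dependence of $\con_0$. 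For \eqref{4-26}, apply It\^o's formula to $(x^{s,\xi})^\tr y^{s,\xi}$ on $[s,T]$ (again with localisation): via the algebraic relation in \eqref{4-09} the drift collapses to $-(u^{s,\xi})^\tr\Cw u^{s,\xi}-(x^{s,\xi})^\tr\Sw x^{s,\xi}$, so that $\xi^\tr y^{s,\xi}(s)=\E^{\Filt_s}[x^{s,\xi}(T)^\tr\Tw x^{s,\xi}(T)+\int_s^T((u^{s,\xi})^\tr\Cw u^{s,\xi}+(x^{s,\xi})^\tr\Sw x^{s,\xi})\vd t]$ a.s.; running the computation of the proof of Lemma~\ref{lem4-10} on $[s,T]$, but conditioning on $\Filt_s$ in place of taking expectations, shows this conditional expectation is $\ge\xi^\tr\iRc(s)\xi$ a.s., which is \eqref{4-26}.

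For uniqueness, let $(x^j,u^j,y^j,z^j)$, $j=1,2$, be two solutions of \eqref{4-09} with the regularity \eqref{4-20}; their difference $(\delta x,\delta u,\delta y,\delta z)$ solves the homogeneous version of \eqref{4-09} with $\delta x(s)=0$ and $\delta y(T)=\Tw\delta x(T)$. It\^o's formula for $\delta x^\tr\delta y$ on $[s,T]$, combined with the algebraic relation, gives $\E[\delta x(T)^\tr\Tw\delta x(T)]+\E\int_s^T(\delta u^\tr\Cw\delta u+\delta x^\tr\Sw\delta x)\vd t=0$, i.e.\ $J_s(\delta u;0)=0$ with state $\delta x$ started from $0$; then Lemma~\ref{lem4-10} (with $\xi=0$) forces $\eps\,\E\int_s^T|\delta u|^2\vd t\le0$, so $\delta u=0$, whence $\delta x\equiv0$ by uniqueness for the linear SDE and $(\delta y,\delta z)\equiv0$ by uniqueness for the linear BSDE. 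This proves uniqueness.

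The linear SDE/BSDE well-posedness and moment estimates are routine; the steps demanding genuine care are the rigorous derivation of the stationarity relation in \eqref{4-09} from optimality --- essentially a stochastic maximum principle for this convex problem --- and the upgrade of the expectation inequality of Lemma~\ref{lem4-10} to the almost-sure bound \eqref{4-26}. Both rest on localisation arguments ensuring that the It\^o integrals arising along the way are true martingales, and that is where I expect the main work to lie.
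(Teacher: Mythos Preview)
Your proposal is correct and follows essentially the same strategy as the paper: obtain existence by solving the SLQ problem on $[s,T]$ (coercivity from Lemma~\ref{lem4-10}, then the stochastic maximum principle), and read off the estimates and uniqueness from Lemma~\ref{lem4-10} together with standard linear SDE/BSDE bounds. The one noteworthy difference is in how the a priori bound \eqref{4-10} is obtained: you compare with the null control via $J_s(u^{s,\xi};\xi)\le J_s(0;\xi)$, which bounds $\|u^{s,\xi}\|_{\ipr^2}$ only for the \emph{optimal} $u$ and therefore forces a separate (correct) uniqueness argument; the paper instead uses the identity $J_s(u;\xi)=\E[\xi^\tr y(s)]$, obtained from It\^o's formula applied to $x^\tr y$, which holds for \emph{every} solution of \eqref{4-09}, so that \eqref{4-10} follows for all solutions at once and uniqueness is immediate by linearity. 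Your derivation of \eqref{4-26} via the conditional-expectation version of Lemma~\ref{lem4-10} is exactly what the paper's ``the argument also works for the general case'' is gesturing at.
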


\begin{proof}
For simplicity, we present the details only for the case that $s = 0$ and $\xi\in\R^n$;
the argument also works for the general case.
Write $(x,u,y,z) = (x^{s,\xi},u^{s,\xi},y^{s,\xi},z^{s,\xi})$ simply.

From the first step of the proof of Theorem~\ref{th3-01}, $J(\,\cdot\,;\xi)$ has an optimum, denoted by $u^*$.
Let $x^*$ be the solution of \eqref{1-004b} with respect to $u^*$.
Then by means of the stochastic maximum principle (cf. \cite[Section 3.1]{bismut1978introductory}), the optimal solution $(x^*, u^*)\in\cpr^2\times\ipr^2$ along with its adjoint processes satisfies a generalized Hamiltonian system that coincides with \eqref{4-01},
thus the existence is obtained.

Next we derive the estimate \eqref{4-10}.
Applying It\^o's formula to $x^\tr y$ and proceeding some standard arguments, we have ($\delta > 0$)
\begin{equation}\label{4-15}
\begin{aligned}
J :=\;& \E \int_0^T \!\!\big( u^\tr \Cw u + x^\tr \Sw x  \big)(t)\vd t
+ \E\big[ x(T)^\tr \Tw x(T) \big]\\
=\;& \xi^\tr  y(0) 
\le \delta \,|y(0)|^2
+ 4\delta^{-1} |\xi|^2.
\end{aligned}
\end{equation}
Lemma~\ref{lem4-10} yields that
\begin{equation}\label{4-50}
\xi^\tr \iRc(0) \xi + \eps\,\E \int_0^{T} |u(t)|^2\vd t \le J.
\end{equation}
Moreover,
it follows from classical estimates for SDEs and BSDEs (cf. \cite{el1997backward}) that
\begin{equation}\label{4-16}
\begin{gathered}
\E \sup_{t\in[0,T]} |x(t)|^2 \le \con_{0} \bigg[|\xi|^2 + \E\int_0^T |u(t)|^2\vd t\bigg],\\
\E \bigg[ \sup_{t\in[0,T]} |y(t)|^2 + \int_0^T  |z(t)|^2 \vd t\bigg]
\le \con_{0}\, \E \sup_{t\in[0,T]} |x(t)|^2.
\end{gathered}
\end{equation}
Combining \eqref{4-15}, \eqref{4-50} and \eqref{4-16}, and taking the positive number $\delta$ sufficiently small, we gain the estimate \eqref{4-10}.

The uniqueness of the solution follows from \eqref{4-10};
moreover, from \eqref{4-15} and \eqref{4-50}, we have $\xi^\tr y(0) \ge \xi^\tr \iRc(0) \xi$,
thus \eqref{4-26} is derived.
The proof of Lemma~\ref{lem4-05} is completed.
\end{proof}

Let us move on the proof of Theorem~\ref{th3-01}.
The existence of the solution of~\eqref{4-01} follows from Lemma~\ref{lem4-05}.
By the uniqueness, we know that
\begin{equation*}
Y(t)\eta = (y^{s,X_1(s)\eta}(t),\dots,y^{s,X_n(s)\eta}(t))
\quad
\text{a.s.}~~\forall\,t\in[s,T],\,\eta\in L^\infty(\PS,\Filt_s,\R^n),
\end{equation*}
where $X_i(s)$, $i=1,\dots,d$, is the $i$-th column vector of $X(s)$,
so \eqref{4-15} yields 
\[
\E[\eta^\tr X(s)^\tr Y(s)\eta] \le \con_0 \,\E[\eta^\tr X(s)^\tr X(s)\eta],
\]
that implies
\[
X(s)^{\tr}Y(s) \le \con_0\, X(s)^{\tr}X(s)
\quad
\text{a.s.}~~\forall\,s\in[0,T].
\]
Analogously, from \eqref{4-26} we can derive
\[
X(s)^{\tr}Y(s) \ge X(s)^{\tr}\iRc(s)X(s)
\quad\text{a.s.}~~\forall\,s\in[0,T].
\]
Since $\iRc$ is a bounded subsolution to SRE~$(\Cw-\eps I_k,\Sw,\Tw)$,
the condition~\eqref{4-21} is then satisfied by taking $K = \iRc$ and $\con = \con_0$.
Therefore, by means of Lemma~\ref{lem4-01}, SRE~$(\Cw,\Sw,\Tw)$ admits a solution in the set $\svbset^{\bd}_{\ge \iRc}$.

\emph{Step 3.}
We shall prove that SRE~$(\Cw,\Sw,\Tw)$ has at most one solution in the set $\svbset^{\bd}_{\ge\iRc}$.

Let $P\in\svbset^{\bd}_{\ge\iRc}$ be a solution to SRE~$(\Cw,\Sw,\Tw)$.
From Lemma~\ref{lem4-04}, the following SDE
\begin{equation}
\md x(t) = [A + B\Ctr(P,\df{P})]x(t) \vd t
+ \isum [C_i + D_i \Ctr(P,\df{P})]x(t) \vd \BM^i_t,
\quad
x(0)=\xi\in\R^n
\end{equation}
has a unique strong solution $x_{P} = x_P(\cdot)$.
Denote 
\[u_{P}(t) = \Ctr(P(t),\df{P}(t)) x_{P}(t),\]
and define the stopping times
$\sigma_m = \inf\{t:|x(t)|\ge m\}\wedge T$.
Then from~\eqref{4-13} (with $\eps = 0$ and $P$ instead of $\iRc$) we have
\begin{equation}\label{4-60}
\xi^\tr P(0) \xi =
\E \int_0^{\si_{\! m}}\!\! \big( u_{P}^\tr \Cw u_{P} + x_{P}^\tr \Sw x_{P}  \big)(t)\vd t
+ \E\big[ x_{P}(\si_{\! m})^\tr P(\si_{\! m}) x_{P}(\si_{\! m}) \big].
\end{equation}
On the other hand, since $\iRc$ is a bounded subsolution to SRE~$(\Cw - \eps I_k,\Sw,\Tw)$,
it follows from~\eqref{4-13} that
\begin{align*}
& \xi^\tr \iRc(0) \xi + \eps \E \int_0^{\si_{\! m}}\!\! |u_{P}(t)|^2 \vd t \\
& \quad \le
\E \int_0^{\si_{\! m}}\!\! \big( u_{P}^\tr \Cw u_{P} + x_{P}^\tr \Sw x_{P}  \big)(t)\vd t
+ \E\big[ x_{P}(\si_{\! m})^\tr \iRc(\si_{\! m}) x_{P}(\si_{\! m}) \big].
\end{align*}
Comparing the last two formulae, and keeping in mind $P(\si_{\! m})\ge\iRc(\si_{\! m})$, we have
\[
\eps\, \E \int_0^{\si_{\! m}}\!\! |u_{P}(t)|^2 \vd t
\le \xi^\tr [P(0) - \iRc(0)] \xi
<\infty.
\]
Letting $m\to \infty$ and from Fatou's lemma, we know that 
$u_{P}\in\ipr^2 (\R^n)$,
thus $x_{P}\in \cpr^2(\R^n)$.
From a known result \cite[Theorem~3.2]{hu2003indefinite}, there is at most one solution of SRE~$(\Cw,\Sw,\Tw)$ in the set $\svbset^{\bd}_{\ge\iRc}$.
Therefore we conclude Theorem~\ref{th3-01}(i).

\emph{Step 4.}
Let $P\in\svbset^{\bd}_{\ge\iRc}$ be the solution to SRE~$(\Cw,\Sw,\Tw)$.
Now we send $m\to\infty$ in \eqref{4-60} and get
\begin{equation}\label{4-61}
\xi^\tr P(0) \xi =
\E \int_0^{T}\!\! \big( u_{P}^\tr \Cw u_{P} + x_{P}^\tr \Sw x_{P}  \big)(t)\vd t
+ \E\big[ x_{P}(T)^\tr P(T) x_{P}(T) \big].
\end{equation}
On the other hand, by Lemma~\ref{lem4-10}, we know that
\[
\xi^\tr P(0) \xi \le J(u;\xi),\quad
\forall\,
\xi\in\R^n,\,
u\in\ipr^2(\R^k).
\]
This along with \eqref{4-61} yields that 
\[
\xi^\tr P(0) \xi = J(u_P;\xi) = \inf_{u\in\ipr^2(\R^k)} J(u;\xi) = V(\xi),
\]
and $u_P = \Ctr(P,\df{P}) x_{P}$ is an optimal feedback control for SLQ~$(\Cw,\Sw,\Tw)$.
Finally, by the stochastic maximum principle again, any solution $(x^*,u^*)$ to SLQ~$(\Cw,\Sw,\Tw)$ coincides with the solution of FBSDE~\eqref{4-09} with $s=0$,
thus the uniqueness of the latter implies the uniqueness of the former.
Therefore, the proof of Theorem~\ref{th3-01} is complete.

\section*{Acknowledgments}
The author would like to thank the
associate editor and referees for their helpful comments and suggestions.

\bibliographystyle{siam}
\bibliography{RicEq}

\end{document}